\numberwithin{equation}{section}
\setlist[itemize]{font = \upshape, before = \leavevmode}
\setlist[enumerate]{font = \upshape, before = \leavevmode}
\setlist[description]{before = \leavevmode}
    \ifodd\value{page}          
\theoremstyle{plain}
\newtheorem{theorem}{Theorem}[section]
\newtheorem{lemma}[theorem]{Lemma}
\newtheorem{prop}[theorem]{Proposition}
\theoremstyle{definition}
\newtheorem{definition}[theorem]{Definition}
\newtheorem{example}[theorem]{Example}
\theoremstyle{remark}
\newtheorem{remark}[theorem]{Remark}
\DeclareMathOperator{\Gal}{Gal}
\DeclareMathOperator{\SL}{SL}
\DeclareMathOperator{\coker}{coker}
\DeclareMathOperator{\Spec}{Spec}
\DeclareMathOperator{\Pl}{Pl}
\DeclareMathOperator{\rk}{rk}
\DeclarePairedDelimiter{\p}{\lparen}{\rparen}          
\DeclarePairedDelimiter{\ip}{\langle}{\rangle}         
\DeclarePairedDelimiter{\pfister}{\langle\!\langle}{\rangle\!\rangle}
\newcommand{\frakm}{\mathfrak{m}}
\newcommand{\WK}{\mathrm{WK}}
\newcommand{\rmc}{\mathrm{c}}
\newcommand{\rmH}{\mathrm{H}}
\newcommand{\rmI}{\mathrm{I}}
\newcommand{\rmK}{\mathrm{K}}
\newcommand{\rmM}{\mathrm{M}}
\newcommand{\rmN}{\mathrm{N}}
\newcommand{\rmW}{\mathrm{W}}
\newcommand{\rmr}{\mathrm{r}}
\newcommand{\sspt}{\mathbf{1}}
\newcommand{\A}{\mathbf{A}}
\newcommand{\C}{\mathbf{C}}
\newcommand{\F}{\mathbf{F}}
\newcommand{\J}{\mathbf{J}}
\newcommand{\Q}{\mathbf{Q}}
\newcommand{\R}{\mathbf{R}}
\newcommand{\Z}{\mathbf{Z}}
\newcommand{\calO}{\mathcal{O}}
\newcommand{\id}{\mathrm{id}}
\newcommand{\MW}{\mathrm{MW}}
\newcommand{\rmMW}{\mathrm{MW}}
\newcommand{\ord}{\mathrm{ord}}
\newcommand{\sgn}{\mathrm{sgn}}
\newcommand{\wt}{\widetilde}
\newcommand{\ol}{\overline}
\newcommand{\defeq}{\vcentcolon=}
\newcommand{\nc}{\mathrm{nc}}
\newcommand{\GW}{\mathrm{GW}}
\newcommand{\vol}{\mathrm{vol}}
\address{Department of Mathematics, University of Oslo, Norway}
\email{\href{mailto:hakon.kolderup@hotmail.com}{hakon.kolderup@hotmail.com}}
\subjclass[2020]{
11R04, 
11R70, 
11S70, 
14F42, 
19F15} 
\keywords{Milnor--Witt $\rmK$-theory, Hilbert symbols, reciprocity laws.}
\begin{document}

\title[Number theoretic aspects of Milnor--Witt $\rmK$-theory]{Remarks on classical number theoretic aspects\\ of Milnor--Witt K-theory}
\date{}
\author{Håkon Kolderup}
\maketitle

\begin{abstract}
We record a few observations on number theoretic aspects of Milnor--Witt $\rmK$-theory, focusing on generalizing classical results on reciprocity laws, Hasse's norm theorem and $\rmK_2$ of number fields and rings of integers. 
\end{abstract}


\section{Introduction}

The algebraic $\rmK$-groups of number fields and rings of integers are known to encode deep arithmetic information. This is witnessed already by the computation of the zeroth and first $\rmK$-groups of the ring of integers $\calO_F$ in a number field $F$: indeed, the torsion subgroup of $\rmK_0(\calO_F)$ is precisely the ideal class group of $F$, while $\rmK_1(\calO_F)$ is the group of units in $\calO_F$. In the 70's, Tate discovered that the second $\rmK$-group of $F$ is inherently related to reciprocity laws on $F$ \cite{Tate-symbols}. More precisely, Tate found that in the case when $F=\Q$ we have
\[
\rmK_2(\Q)\cong\Z/2\oplus\bigoplus_{p\text{ }\rm{prime}}\F_p^\times.
\]
Tate's proof method follows essentially Gauss' first proof of the quadratic reciprocity law involving an induction over the primes. One can show that Tate's structure theorem for $\rmK_2(\Q)$ gives rise to the product formula for Hilbert symbols over $\Q$, which is an equivalent formulation of the law of quadratic reciprocity. See \cite[II §7]{Gras} for details.

Besides the algebraic $\rmK$-groups there are several other important invariants attached to a number field $F$. A noteworthy example is the Witt ring $\rmW(F)$ of $F$, which subsumes much of the theory of quadratic forms over $F$. For instance, the celebrated Hasse--Minkowski's local-global principle can be formulated in terms of the Witt ring by stating that an element of $\rmW(\Q)$ is trivial if and only if it maps to zero in $\rmW(\R)$ and in $\rmW(\Q_p)$ for each prime $p$ \cite[IV Corollary 2.4]{Milnor-Husemoller}. 
Another example is given by the \emph{Milnor $\rmK$-groups of $F$}, $\rmK_n^\rmM(F)$, introduced by Milnor in his 1970 paper \cite{Milnor-quad-forms}. By definition, the Milnor $\rmK$-groups of $F$ coincide with the algebraic $\rmK$-groups of $F$ in degrees $0$ and $1$, while Matsumoto's theorem on $\rmK_2$ of fields \cite{Matsumoto-thm} implies that also $\rmK_2^\rmM(F)\cong\rmK_2(F)$. In higher degrees, however, these groups are in general different.
On the other hand, Milnor proved in \cite{Milnor-quad-forms} that the Milnor $\rmK$-groups of $F$ are intimately linked with the Witt ring of $F$. 
The understanding of this connection between Milnor $\rmK$-theory and quadratic forms was greatly enhanced in the wake of Morel and Voevodsky's introduction of motivic homotopy theory \cite{Morel-Voevodsky}, and in particular by Orlov, Vishik and Voevodsky's solution of Milnor's conjecture on quadratic forms \cite{Orlov-Vishik-Voevodsky}. In fact, both the Milnor $\rmK$-groups and the Witt ring was set in new light in the context of motivic homotopy groups. More precisely, Hopkins and Morel introduced the so-called \emph{Milnor--Witt $\rmK$-groups} $\rmK_*^\MW(F)$ of $F$, and Morel showed in \cite[Theorem 6.4.1]{Morel-sphere-spt} that for any integer $n$, there is a canonical isomorphism
\begin{align}
\pi_{n,n}\sspt\cong \rmK^{\rmMW}_{-n}(F).
\end{align}
Here $\pi_{n,n}\sspt$ denotes the motivic homotopy group of the motivic sphere spectrum $\sspt$ over $F$ in bidegree $(n,n)$. We refer the reader to, e.g., \cite{mot-htpy-gr-survey} for a survey on motivic homotopy groups. The Milnor--Witt $\rmK$-groups of $F$ are equipped with forgetful maps to the Milnor $\rmK$-groups as well as to the Witt ring, and can therefore be considered as an enhancement of the Milnor $\rmK$-groups of $F$ which also takes into account information coming from quadratic forms defined over $F$. 

Below we investigate what number theoretic information the lower Milnor--Witt $\rmK$-groups carry. In particular, we define Hilbert symbols and idèle class groups in this setting; we consider the connection between $\rmK_2^\MW(F)$ and reciprocity laws; we compute $\rmK_2^\MW(\calO_F)$ in a few explicit examples; and we show a Hasse type norm theorem for $\rmK_2^\MW$. Since the Milnor--Witt $\rmK$-groups contain information coming from quadratic forms, we obtain analogs of classical results that are more sensitive to the infinite real places of the number field than the ordinary $\rmK$-groups.

\subsection{Outline}
In \Cref{section:prelims} we start by recalling the definition and basic properties of Milnor--Witt $\rmK$-theory, before we move on to computing Milnor--Witt $\rmK$-groups of the rationals as well as some local fields. We finish this preliminary section by defining valuations in the setting of Milnor--Witt $\rmK$-theory, lifting the classical valuations on a number field.

In \Cref{section:topology} we put a topology on the first Milnor--Witt $\rmK$-group $\rmK_1^\MW(F_v)$ of a completion of the number field $F$, in such a way that $\rmK_1^\MW(F_v)$ becomes a covering of $F_v^\times$. This is used in \Cref{section:ideles} where we define idèles and idèle class groups in the setting of Milnor--Witt $\rmK$-theory. We show that the associated volume zero idèle class group $\wt C^0_F$ is again a compact topological group extending the classical compact group $C_F^0$. See \Cref{prop:vol-zero} for more details.

In \Cref{section:Moore} we shift focus from $\rmK_1^\MW$ to $\rmK_2^\MW$. We define Hilbert symbols on Milnor--Witt $\rmK$-groups and show a Moore reciprocity sequence in this setting; see \Cref{prop:MWmoore}. We then move on to the study of $\rmK_2^\MW$ of rings of integers in \Cref{section:K2}. Finally, in \Cref{section:Hasse} we show an analog of Hasse's norm theorem similar to the generalizations of Bak--Rehmann \cite{Bak-Rehmann} and Østvær \cite{PA-Hasse}.

\subsection{Conventions and notation}Throughout we let $F$ denote a number field of signature $(r_1,r_2)$, and we let $\Pl_F$ denote the set of places of $F$. For any $v\in\Pl_F$, we let $F_v$ denote the completion of $F$ at the place $v$, and we let $i_v\colon F\hookrightarrow F_v$ denote the embedding of $F$ into $F_v$. By Ostrowski's theorem, $\Pl_F$ decomposes as a disjoint union $\Pl_F=\Pl_0\cup\Pl_\infty$ of the finite and infinite places of $F$, respectively. The set $\Pl_\infty$ of infinite places of $F$ decomposes further into the sets $\Pl_\infty^{\rmr}$ and $\Pl_\infty^\rmc$ of real and complex infinite places, respectively. Finally, let $\Pl_F^\nc$ denote the set of noncomplex places of $F$.

In order to streamline the notation with the literature, we will often use the notations $\rmK_n^\rmM(F)$ and $\rmK_n(F)$ interchangeably whenever $n\in\{0,1,2\}$.

\subsection{Acknowledgments}
I thank Ambrus Pál for interesting discussions and for encouraging me to write this text. Furthermore, I am very grateful to Jean Fasel for particularly helpful comments on a draft, and to Kevin Hutchinson for his interest and for useful comments. I would also like to thank the anonymous referee for a careful reading and for many valuable remarks that helped improve the exposition.
This work was supported by the RCN Frontier Research Group Project no. 250399.

\section{Preliminaries}\label{section:prelims}
\subsection{Milnor--Witt \texorpdfstring{$\rmK$}{K}-theory}\label{section:KMW}
We start out by providing some generalities on Milnor--Witt $\rmK$-groups, mostly following Morel's book \cite{Morel-over-a-field}.

\begin{definition}[Hopkins--Morel]
The \emph{Milnor--Witt $\rmK$-theory} $\rmK_*^{\rmMW}(F)$ of $F$ is the graded associative $\Z$-algebra with one generator $[a]$ of degree $+1$ for each unit $a\in F^\times$, and one generator $\eta$ of degree $-1$, subject to the following relations:
\begin{center}
\begin{tabular}{ll}
(I) $[a][1-a]=0$ for any $a\in F^\times\setminus\{1\}$ & (Steinberg relation).\\
(II) $[ab]=[a]+[b]+\eta[a][b]$ & (twisted $\eta$-logarithmic relation).\\
(III) $\eta[a]=[a]\eta$ & ($\eta$-commutativity).\\
(IV) $(2+\eta[-1])\eta=0$ & (hyperbolic relation).
\end{tabular}
\end{center}
We let $\rmK_n^{\rmMW}(F)$ denote the $n$-th graded piece of $\rmK_*^{\rmMW}(F)$. The product $[a_1]\cdots[a_n]\in \rmK_n^{\rmMW}(F)$ may also be denoted by $[a_1,\dots,a_n]$; by \cite[Lemma 3.6 (1)]{Morel-over-a-field}, these symbols generate the abelian group $\rmK_n^\MW(F)$.
\end{definition}

\subsubsection{Milnor--Witt $\rmK$-theory and quadratic forms}
Let us explain the relationship between Milnor--Witt $\rmK$-theory and quadratic forms. Recall for instance from \cite{Milnor-Husemoller,Scharlau} that a \emph{symmetric bilinear form}\footnote{The connection between symmetric bilinear forms and quadratic forms is as follows. Any symmetric bilinear form $\beta$ gives rise to a quadratic form $q$ by setting $q(x)\defeq\beta(x,x)$. Moreover, if $F$ is of characteristic different from $2$, then any quadratic form over $F$ arises uniquely in this way from a symmetric bilinear form over $F$.} over $F$ is a finite dimensional $F$-vector space $V$ together with a nondegenerate symmetric bilinear map $\beta\colon V\times V\to F$. The group completion of the semiring of isomorphism classes of symmetric bilinear forms over $F$ is called the \emph{Grothendieck--Witt ring} of $F$, denoted $\GW(F)$. Any unit $u$ of $F$ defines a symmetric bilinear form $\ip{u}$ whose underlying vector space is just $F$, and whose bilinear map $\beta$ is given by $\beta(x,y)=uxy$. In fact, the forms $\ip{u}$ for $u\in F^\times$ additively generate the Grothendieck--Witt ring of $F$ \cite[Lemma 3.9]{Morel-over-a-field}. Sending the element $1+\eta[u]\in\rmK_0^\MW(F)$ to $\ip{u}\in\GW(F)$ gives a well defined ring homomorphism from $\rmK_0^\MW(F)$ to $\GW(F)$ which is in fact an isomorphism \cite[Chapter 3]{Morel-over-a-field}. In light of this isomorphism we will, for any $u\in F^\times$, denote the element $1+\eta[u]\in\rmK_0^\MW(F)$ also by $\ip{u}$.

The addition and multiplication in the ring $\GW(F)$ stems from direct sum and tensor product of vector spaces over $F$. The form $H\defeq\ip1+\ip{-1}\in\GW(F)$ is called the \emph{hyperbolic plane}, and this form generates an ideal which is isomorphic to $\Z$. The resulting quotient ring $\rmW(F)\defeq\GW(F)/(H)$ is called the \emph{Witt ring} of $F$. In the defining relation (IV) of Milnor--Witt $\rmK$-theory above, the element $2+\eta[-1]=1+\ip{-1}\in\rmK_0^{\MW}(F)$ corresponds to $H\in\GW(F)$. Thus the hyperbolic relation essentially implies that multiplication by $\eta$ on the negative Milnor--Witt $\rmK$-groups becomes an isomorphism, identifying $\rmK_{n}^\MW(F)$ with $\rmW(F)$ for all $n<0$; see \cite[Lemma 3.10]{Morel-over-a-field} for details.

Taking the rank of forms over $F$ defines a ring homomorphism $\GW(F)\to\Z$, which descends to a homomorphism $\rmW(F)\to\Z/2$. The kernel of the map $\rmW(F)\to\Z/2$ consists of the even-dimensional forms in the Witt ring, and is referred to as the \emph{fundamental ideal} of $F$, denoted $\rmI(F)$. The powers $\rmI^n(F)$ of the fundamental ideal are generated by the so-called \emph{Pfister forms} $\pfister{a_1,\dots,a_n}\defeq(\ip1-\ip{a_1})\cdots(\ip1-\ip{a_n})$. For $n\ge1$ there is a group homomorphism from $\rmK_n^\MW(F)$ to $\rmI^n(F)$ given by mapping $[a_1,\dots,a_n]$ to the Pfister form $\pfister{a_1,\dots,a_n}$. We can use this map to define, for each infinite real place $v$ of $F$, a \emph{signature homomorphism} $\wt\sgn_v\colon\rmK_n^\MW(F)\to\Z$ as the composition
\begin{equation}\label{eq:signature-hom}
\wt\sgn_v\colon\rmK_n^\MW(F)\to\rmK_n^\MW(F_v)\to\rmI^{n}(F_v)\xrightarrow{\cong}\Z.
\end{equation}
Here the last homomorphism is given by the signature of quadratic forms \cite[p. 62]{Milnor-Husemoller}: for example, for $a\in F_v^\times$, the signature of $\ip a$ is $+1$ if $a$ is positive, and $-1$ otherwise. Thus the signature of $\pfister{-1,\dots,-1}\in\rmI^n(F_v)$ is $2^n$, and this defines an isomorphism $\rmI^{n}(F_v)\cong\Z$ by \cite[III Corollary 2.7]{Milnor-Husemoller}, carrying the generator $\pfister{-1,\dots,-1}$ to $1\in\Z$.

As the name suggests, Milnor--Witt $\rmK$-theory is also related to Milnor $\rmK$-theory. Indeed, for any $n\ge0$ there is a surjective homomorphism $p\colon \rmK_n^\MW(F)\to\rmK_n^\rmM(F)$ determined by killing $\eta$ and sending $[a]$ to $\{a\}\in\rmK_1^\rmM(F)$. Its kernel is $\rmI^{n+1}(F)$, the $(n+1)$-th power of the fundamental ideal in the Witt ring of $F$. The above discussion is subsumed by the following pullback square, which is proved in \cite[Theorem 5.3]{Morel-Hopkins}:
\begin{equation}\label{eq:KMW-pullback-sq}
\begin{tikzcd}
\rmK_n^\MW(F)\ar{d}\ar{r}{p} & \rmK_n^\rmM(F)\ar{d}\\
\rmI^n(F)\ar{r} & \rmI^n(F)/\rmI^{n+1}(F)
\end{tikzcd}
\end{equation}

\subsubsection{The residue map}\label{section:res-and-transfer}
In Milnor $\rmK$-theory, there is a residue map, or \emph{tame symbol}, $\partial_v\colon\rmK_*^\rmM(F)\to\rmK_{*-1}^\rmM(k(v))$ defined for each finite place $v$ of $F$. These homomorphisms assemble to a \emph{total residue map}
\[
\partial\colon \rmK_*^\rmM(F)\to\bigoplus_{v\in\Pl_0}\rmK_{*-1}^\rmM(k(v))
\]
given as $\partial=\bigoplus_{v\in\Pl_0}\partial_v$. Morel shows in \cite[Theorem 3.15]{Morel-over-a-field} that the same is true for Milnor--Witt $\rmK$-theory. More precisely, for each uniformizer $\pi_v$ for $v$, there is a unique homomorphism $\partial_v^{\pi_v}$ giving rise to a graded homomorphism
\[
\partial=\bigoplus_{v\in\Pl_0}\partial_v^{\pi_v}\colon\rmK_*^\MW(F)\to\bigoplus_{v\in\Pl_0}\rmK_{*-1}^\MW(k(v))
\]
which commutes with $\eta$ and satisfies $\partial_v^{\pi_v}([\pi_v,u_1,\dots,u_n])=[\ol u_1,\dots,\ol u_n]$ whenever the $u_i$'s are units modulo $\pi_v$. In contrast to the case for Milnor $\rmK$-theory, the maps $\partial_v^{\pi_v}$ depend on the choice of uniformizer $\pi_v$; this stems from the relation $[u\pi_v]=[u]+[\pi_v]+\eta[u,\pi_v]$. One can however define a twisted version of Milnor--Witt $\rmK$-theory in order to make the maps $\partial_v^{\pi_v}$ canonical. Indeed, for any field $k$ and any one-dimensional $k$-vector space $V$, let $\rmK_*^\MW(k,V)\defeq\rmK_*^\MW(k)\otimes_{\Z[k^\times]}\Z[V\setminus\{0\}]$, where $\Z[k^\times]$ acts by $u\mapsto\ip u$ on $\rmK_*^\MW(k)$ and by multiplication on $\Z[V\setminus\{0\}]$. Then the map $\partial_v\colon\rmK_*^\MW(F)\to\rmK_{*-1}^\MW(k(v),(\frakm_v/\frakm_v^2)^\vee)$ given by $\partial_v([\pi_v,u_1,\dots,u_n])=[\ol u_1,\dots,\ol u_n]\otimes\ol\pi_v$ is independent of the choice of uniformizer \cite[Remark 3.21]{Morel-over-a-field}.

\subsubsection{A few exact sequences}\label{section:on-seses}
Let us collect some short exact sequences involving Milnor--Witt $\rmK$-groups that will be used later in the text. First of all, the square \eqref{eq:KMW-pullback-sq} gives a short exact sequence
\begin{equation}\label{eq:fundamental-KMW-sequence}
0\to\rmI^{n+1}(F)\to\rmK_n^\MW(F)\xrightarrow{p}\rmK_n^\rmM(F)\to0.
\end{equation}
Here the map $\rmI^{n+1}(F)\to\rmK_n^\MW(F)$ is defined by sending the Pfister form $\pfister{a_1,\dots,a_{n+1}}$ to $\eta[a_1\dots,a_{n+1}]$.
There is a similar sequence with the fundamental ideal on the right hand-side; it takes the form \cite[p. 6]{Hutchinson-Tao2}
\begin{align}\label{eq:ses-with-witt-on-right}
0\to2\rmK_n^\rmM(F)\to\rmK_n^\MW(F)\to\rmI^n(F)\to0.
\end{align}
The left hand-side homomorphism is here given by mapping $2\{a_1,\dots,a_n\}$ to $h[a_1,\dots,a_n]$, where $h\defeq 1+\ip{-1}$ is the hyperbolic plane.

On the other hand, there is a fundamental computation by Morel  \cite[Theorem 3.24]{Morel-over-a-field} (which follows Milnor's computation in the case of Milnor $\rmK$-theory \cite{Milnor}) showing that there is a split short exact sequence
\begin{align}\label{eq:Morel-split-ses}
0\to \rmK_{n}^\MW(F)\to\rmK_{n}^\MW(F(t))\xrightarrow{\partial}\bigoplus_{x\in(\A^1_F)^{(1)}}\rmK_{n-1}^\MW(k(x))\to0.
\end{align}
Here $x$ runs over all closed points of $\A^1_F$.

\subsubsection{Transfers in Milnor--Witt \texorpdfstring{$\rmK$}{K}-theory}\label{transfer-map}
If $L/F$ is an extension of number fields, recall that there exist \emph{norm maps}, or \emph{transfer maps} in Milnor $\rmK$-theory \cite{KatoII},
\[
\tau_{L/F}\colon\rmK_n^\rmM(L)\to\rmK_n^\rmM(F),
\]
which generalize the classical norm $\rmN_{L/F}\colon L^\times\to F^\times$. Similar maps exist also for Milnor--Witt $\rmK$-theory, and are constructed in the same way as for the Milnor $\rmK$-groups. We briefly recall this construction. 
Let $\alpha$ be a primitive element for the extension of number fields $L/F$, so that $L=F(\alpha)$. Let $P\in F[t]$ be the minimal polynomial of $\alpha$. The transfer map 
\[\tau_{L/F}\colon\rmK_n^\MW(L)\to\rmK_n^\MW(F)
\] 
is defined by using the split short exact sequence \eqref{eq:Morel-split-ses}
as follows. Choose a section $s\defeq\bigoplus_xs_x$ of $\partial$, and let $y\in(\A^1_F)^{(1)}$ be the closed point corresponding to $P$. Then we define $\tau_{L/F}$ as the composition
\[
\tau_{L/F}\colon\rmK_n^\MW(L)\xrightarrow{\cong}\rmK_n^\MW(k(y))\xrightarrow{s_y}\rmK_{n+1}^\MW(F(t))\xrightarrow{-\partial_\infty^{-1/t}}\rmK_n^\MW(F),
\]
where $\partial_\infty^{-1/t}$ is the residue map corresponding to the valuation on $F(t)$ with uniformizer $-1/t$. It is a difficult theorem, proved by Morel in \cite[Theorem 4.27]{Morel-over-a-field}, that the map $\tau_{L/F}$ does not depend on the choices made.

\subsection{Milnor--Witt \texorpdfstring{$\rmK$}{K}-theory of finite and local fields}
We compute some Milnor--Witt $\rmK$-groups of finite and local fields. The results follow readily from the structure of the corresponding Milnor $\rmK$-groups along with some knowledge about fundamental ideals.

\begin{prop}\label{prop:K2MW(Fv)}
\begin{enumerate}
\item[(i)] If $\F$ is a finite field, then $\rmK_n^\MW(\F)\cong\rmK_n^\rmM(\F)$ for all $n\ge1$.
\item[(ii)] Let $v\in\Pl_F$ be a place of $F$. We have isomorphisms of abelian groups
\[
\rmK_n^{\MW}(F_v)\cong \begin{cases}
\rmK_n^{\rmM}(F_v), & v\in\Pl_0,\quad \hspace{4pt}n\ge2\\
\Z\oplus A_v, & v\in\Pl_\infty^{\rmr},\quad n\ge1\\
\rmK_n^\rmM(F_v), & v\in\Pl_\infty^{\mathrm c},\quad n\ge0,
\end{cases}
\]
where the $A_v$'s are uniquely divisible abelian groups.
\end{enumerate}
\end{prop}

\begin{remark}\label{remark:K2M-local-field-w-references}
The Milnor $\rmK$-groups of local fields are known: if $v$ is a finite place of $F$, then $\rmK_2^\rmM(F_v)\cong\mu(F_v)\oplus A_v$, where $A_v$ is a uniquely divisible group \cite{Merkurjev-torsion-K2}, while $\rmK_n^\rmM(F_v)$ is uniquely divisible for $n\ge3$ \cite{Sivitskii}. If $v$ is an infinite real place of $F$, then for all $n\ge1$, $\rmK_n^\rmM(F_v)$ is the direct sum of a cyclic group of order $2$ generated by $\{-1,\dots,-1\}$ and a uniquely divisible group \cite[Example 7.2]{WeibelK}. Finally, if $v$ is a complex place of $F$ then $\rmK_n^\rmM(F_v)$ is uniquely divisible for all $n\ge1$ \cite[Example 7.2]{WeibelK}.
\end{remark}

\begin{proof}[Proof of \Cref{prop:K2MW(Fv)}]
The first claim follows from the exact sequence \eqref{eq:fundamental-KMW-sequence} since $\rmI^2(\F)=0$ for any finite field $\F$ \cite[p. 81]{Milnor-Husemoller}.

For (ii), first assume $v\in\Pl_\infty^{\mathrm c}$. Then the statement follows from the fact that $\C$ is quadratically closed \cite[Proposition 3.13]{Morel-over-a-field}. If $v\in\Pl_0$ we have $\rmI^3(F_v)=0$ by \cite[p. 81]{Milnor-Husemoller}, and hence the exact sequence \eqref{eq:fundamental-KMW-sequence}
yields $\rmK_n^{\MW}(F_v)\cong \rmK_n^\rmM(F_v)$ for each $n\ge2$. 
Finally, suppose $v\in\Pl_\infty^\rmr$. Then we have $\rmI^{n+1}(F_v)\cong\Z$, generated by the Pfister form $\pfister{-1,\dots,-1}$ \cite[p. 81]{Milnor-Husemoller}. Furthermore, by \Cref{remark:K2M-local-field-w-references}, $\rmK_n^\rmM(F_v)\cong\Z/2\oplus A$, where $A$ is a uniquely divisible abelian group. Using that $2\rmK_n^\rmM(F_v)\cong A$, the sequence \eqref{eq:ses-with-witt-on-right} above reduces in this case to
\[
0\to A\to\rmK_n^\MW(F_v)\to\Z\to0.
\]
The right hand-side being free, this sequence splits and the result follows.
\end{proof}

\begin{remark}\label{rmk:isos}
In the case of finite places and $n=2$, the isomorphisms appearing in \Cref{prop:K2MW(Fv)} are given by the classical local Hilbert symbols $(-,-)_v$ \cite[V §3]{Neukirch}. On the other hand, if $v\in\Pl_\infty^{\rmr}$, we can think of the signature map $\rmK_2^\MW(\R)\to\Z$ as a ``$\Z$-valued Hilbert symbol'' extending the classical $\Z/2$-valued Hilbert symbol on $\R$. We will return to this point of view in \Cref{section:Hilbert}.
\end{remark}

\subsection{Milnor--Witt \texorpdfstring{$\rmK$}{K}-theory of the rationals} 
The Witt- and Grothendieck--Witt ring of $\Q$ is determined for instance in \cite[IV §2]{Milnor-Husemoller}. Thus we know $\rmK_n^\MW(\Q)$ for $n\le0$. The the remaining groups are given as follows:

\begin{prop}\label{prop:MW(Q)}
For each $n\ge1$, the residue map $\partial$ defined in \Cref{section:res-and-transfer} induces an isomorphism of abelian groups
\[
\rmK_n^{\MW}(\Q)\cong\Z\oplus\bigoplus_{p\text{ }\rm{prime}}\rmK_{n-1}^{\MW}(\F_p).
\]
In particular, 
\[
\rmK_1^\MW(\Q)\cong\Z\oplus\bigoplus_{p\text{ }\rm{prime}}\GW(\F_p);\quad\rmK_2^\MW(\Q)\cong\Z\oplus\bigoplus_{p\text{ }\rm{prime}}\F_p^\times,
\]
while $\rmK_n^\MW(\Q)\cong\Z$ for $n\ge3$.
\end{prop}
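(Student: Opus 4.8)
The plan is to study the total residue map $\partial=\bigoplus_{p}\partial_p^{p}$ attached to the uniformizers $\pi_p=p$, and to prove that for $n\ge1$ it is a split surjection with kernel isomorphic to $\Z$; the negative-degree cases reduce to the classical decomposition $\rmW(\Q)\cong\Z\oplus\bigoplus_p\rmW(\F_p)$ \cite{Milnor-Husemoller}, since $\rmK_m^\MW\cong\rmW$ for $m<0$ and $\partial$ is then the classical second residue. Granting the isomorphism, the three ``In particular'' assertions follow by evaluating the summands: as $\rmI^2(\F_p)=0$ for every finite field, the sequence $0\to\rmI^{n}(\F_p)\to\rmK_{n-1}^\MW(\F_p)\to\rmK_{n-1}^\rmM(\F_p)\to0$ degenerates to an isomorphism $\rmK_{n-1}^\MW(\F_p)\cong\rmK_{n-1}^\rmM(\F_p)$ whenever $n\ge2$, giving $\rmK_1^\MW(\F_p)\cong\F_p^\times$ and $\rmK_{n-1}^\MW(\F_p)=0$ for $n\ge3$, while $\rmK_0^\MW(\F_p)=\GW(\F_p)$ by definition.

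I would establish surjectivity of $\partial$ and control its kernel by comparing it simultaneously with the Milnor residue and with the second residue on powers of the fundamental ideal. Since $\partial_p^p$ commutes with the projection $p\colon\rmK^\MW\to\rmK^\rmM$ killing $\eta$ and restricts on $\rmI^{n+1}$ to the classical second residue, the natural sequence $0\to\rmI^{n+1}\to\rmK^\MW_n\to\rmK^\rmM_n\to0$ yields a commutative ladder of short exact sequences, the top row over $\Q$ and the bottom row the direct sum over $p$ of the analogous sequences over the $\F_p$. Feeding in Tate's theorem \cite{Tate-symbols} (the Milnor residue is onto with kernel $\cong\Z/2$ generated by $\{-1,\dots,-1\}$ for $n\ge1$) and the classical fact that the fundamental-ideal residue is onto with kernel $\rmI^{n+1}(\Z)\cong\Z$, the snake lemma gives $\coker\partial=0$ and presents the kernel as an extension $0\to\Z\to\ker\partial\to\Z/2\to0$.

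It then remains to identify this extension and to split $\partial$. For the kernel I would localise at the real place and use \Cref{prop:K2MW(Fv)}: there the generator $\pfister{-1,\dots,-1}$ of $\rmI^{n+1}(\R)$ maps to $\eta[-1]^{n+1}=-2[-1]^n$ inside $\Lambda_\infty^n\cong\Z$, that is, to twice the class $[-1]^n$ that generates the Milnor quotient $\Z/2$. Hence the extension is $\Z\xrightarrow{\times2}\Z\to\Z/2$, so $\ker\partial\cong\Z$, generated by the residue-free symbol $[-1,\dots,-1]$ and detected by the signature at $\R$. Splitting $\partial$ is vacuous for $n\ge3$, where the target vanishes; for $n\in\{1,2\}$ one builds a section by lifting the generators of the target to symbols supported at $p$, namely $\bar u\mapsto[p,\tilde u]$ when $n=2$ and $\langle\bar u\rangle\mapsto[p\tilde u]$ when $n=1$, for suitable global lifts $\tilde u$ of $\bar u$.

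The main obstacle is exactly producing a section that is compatible at all primes at once. The residue of $[p,\tilde u]$ at a prime $q\ne p$ vanishes only if $\tilde u$ is a $q$-unit, so one must choose the lifts coherently; in Milnor $\rmK$-theory this is Tate's Gauss-style induction over the ordered primes, but here the twisted relation $[ab]=[a]+[b]+\eta[a][b]$ obstructs the naive symbol bookkeeping and forces one to carry the $\eta$-corrections through the induction. Equivalently, one must exhibit sections of the Milnor and fundamental-ideal residue sequences that agree modulo $\rmI^{n+1}$ and hence glue through the pullback square relating $\rmK_n^\MW$, $\rmK_n^\rmM$ and $\rmI^n$; verifying this compatibility, rather than the homological algebra above, is where the real work lies.
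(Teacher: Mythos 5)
Your homological skeleton is correct, and its ingredients are the same ones the paper uses: the fundamental sequences $0\to\rmI^{n+1}\to\rmK_n^\MW\to\rmK_n^\rmM\to0$ over $\Q$ and over the $\F_p$, Tate's computation of $\rmK_n^\rmM(\Q)$, the structure of $\rmI^{n+1}(\Q)$, and the relation $\eta[-1]^{n+1}=-2[-1]^n$ to pin down the kernel. Your snake-lemma ladder does give surjectivity of $\partial$ for $n\ge1$ together with a nonsplit extension $0\to\Z\to\ker\partial\to\Z/2\to0$, whence $\ker\partial\cong\Z$ generated by $[-1,\dots,-1]$; this part is sound, and the ``in particular'' deductions from $\rmI^2(\F_p)=0$ are fine.

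The gap is the splitting, which is exactly what the statement's direct sum requires and what your proposal leaves undone. For $n\in\{1,2\}$ the extension
\[
0\to\Z\to\rmK_n^\MW(\Q)\xrightarrow{\partial}\bigoplus_{p\ge2}\rmK_{n-1}^\MW(\F_p)\to0
\]
does not split for formal reasons: $\Ext\bigl(\bigoplus_p\GW(\F_p),\Z\bigr)\cong\prod_{p>2}\Z/2$ and $\Ext\bigl(\bigoplus_p\F_p^\times,\Z\bigr)\cong\prod_p\Z/(p-1)$ are nonzero, so a splitting must actually be produced, and you explicitly defer it (``where the real work lies'') to a Tate-style induction with $\eta$-corrections that you never carry out. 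Moreover, that deferred construction is a red herring: you do not need a \emph{section} of $\partial$ at all, since a \emph{retraction} of the inclusion of the kernel splits the sequence just as well. Having shown $\ker\partial=\Z\cdot[-1]^n$, consider the composite $\rmK_n^\MW(\Q)\to\rmK_n^\MW(\R)\to\rmI^n(\R)\cong\Z$, where the second map sends $[a_1,\dots,a_n]$ to $\pfister{a_1,\dots,a_n}$ and $\rmI^n(\R)$ is free on $\pfister{-1,\dots,-1}$; this carries the generator $[-1]^n$ of $\ker\partial$ to a generator, hence restricts to an isomorphism on $\ker\partial$ and splits it off, giving $\rmK_n^\MW(\Q)\cong\Z\oplus\bigoplus_p\rmK_{n-1}^\MW(\F_p)$ with no bookkeeping at the finite primes. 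This left-splitting is in essence what the paper does from the outset: it decomposes $\rmK_n^\MW(\Q)=\Lambda_\infty^n\oplus\Lambda_0^n$ internally (directness checked by the real signature, as in the proof of \Cref{prop:K2MW(Fv)}), so the $\Z$-summand is split off before any residue computation, and only the complement $\Lambda_0^n$ has to be identified with $\bigoplus_p\rmK_{n-1}^\MW(\F_p)$. With that one-line retraction added, your argument closes and becomes a legitimate alternative organization of the same proof.
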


\begin{proof}
For each $n\ge1$ let $\Lambda_n$ denote the kernel of the signature homomorphism $\wt\sgn\colon\rmK_n^\MW(\Q)\to\Z$ defined in \eqref{eq:signature-hom}. Since the target of $\wt\sgn$ is free, we have
$
\rmK_n^\MW(\Q)\cong\Z\oplus\Lambda_n,
$
and it remains to identify $\Lambda_n$.
Since $\rmK_n^\rmM(\Q)\cong\Z/2\oplus\bigoplus_{p\text{ prime}}\rmK_{n-1}^\rmM(\F_p)$ for $n\ge1$ \cite{AKII}, then using the sequence \eqref{eq:fundamental-KMW-sequence} it follows that we have a commutative diagram with exact rows
\[\begin{tikzcd}\label{eq:big-KMWQ-diagram}
0\ar{r} & \ker(\sgn)\ar{d}\ar{r} & \Lambda_n\ar{d}\ar{r} & \displaystyle{\bigoplus_{p\text{ prime}}}\rmK_{n-1}^\rmM(\F_p)\ar{d}\ar{r} & 0\\
 0\ar{r} & \rmI^{n+1}(\Q)\ar{d}{\sgn}\ar{r} & \rmK_n^\MW(\Q)\ar{d}{\wt\sgn}\ar{r}{p} & \Z/2\oplus\displaystyle{\bigoplus_{p\text{ prime}}}\rmK_{n-1}^\rmM(\F_p)\ar{r}\ar{d} & 0\\
 0 \ar{r} & \Z\ar{r} & \Z\ar{r} & \Z/2\ar{r} & 0.
\end{tikzcd}
\]
Here $\sgn$ denotes the signature map on quadratic forms \cite[p. 62]{Milnor-Husemoller}. From knowledge about the fundamental ideal of $\Q$ \cite[IV §2]{Milnor-Husemoller} we conclude the following:
\begin{itemize}
\item For $n=1$ the upper short exact sequence reads
\[
0\to\bigoplus_{p>2\text{ prime}}\Z/2\to\Lambda_1\to\bigoplus_{p\text{ prime}}\Z\to0.
\]
The right hand-side being free, this sequence splits, and we conclude that 
\[
\Lambda_1\cong\Z\oplus\bigoplus_{p>2\text{ prime}}(\Z\oplus\Z/2)\cong\bigoplus_{p\text{ prime}}\GW(\F_p).
\]
\item If $n\ge2$, then the map $\sgn$ is an isomorphism and the claim follows.
\end{itemize}
This finishes the proof.
\end{proof}

\subsection{Valuations}\label{subsect:val}

Let $v$ be a finite place of $F$. Classically, the $v$-adic discrete valuation on the local field $F_v$ is a homomorphism
$
\ord_v\colon F_v^\times\to\Z
$
which, by definition, coincides with the residue map $\partial_v\colon \rmK_1^\rmM(F_v)\to\rmK_0^\rmM(k(v))=\Z$ on Milnor $\rmK$-theory. The units $\calO_v^\times$ of the corresponding valuation ring $\calO_v$ then coincides with the kernel of $\ord_v$. Following \cite[I Definition 1.5]{Gras} we extend this picture to the real and complex places of $F$. Thus,  
if $v$ is an infinite real place of $F$ we denote by $\ord_v$ the homomorphism
$
F_v^\times\to\Z/2
$
under which $x\in F_v^\times\cong \R^\times$ maps to $0$ if $x>0$, and $1$ if $x<0$. On the other hand, we set $\ord_v\defeq0$ whenever $v$ is a complex infinite place. In any case, we let $\calO_v^\times$ denote the kernel of $\ord_v$.

\begin{definition}\label{def:valuations}
Let $v$ be a place of $F$. 
\begin{enumerate}
\item If $v\in\Pl_0$, we define
\[
\wt\ord_v\colon \rmK_1^\MW(F_v)\to\GW(k(v),(\frakm_v/\frakm_v^2)^\vee)
\]
by $\wt\ord_v\defeq\partial_v$, where $\partial_v$ is the residue map on Milnor--Witt $\rmK$-theory.
\item If $v\in\Pl_\infty^\rmr$, we define the homomorphism
\[
\wt\ord_v\colon\rmK_1^\MW(F_v)\to\Z
\]
as the signature homomorphism \eqref{eq:signature-hom}.
\item For $v\in\Pl_\infty^\rmc$, we let $\wt\ord_v$ be the trivial homomorphism on $\rmK_1^\MW(F_v)$.
\end{enumerate}
In any case, we let
$
\rmK_1^\MW(\calO_v)
$
denote the kernel of $\wt\ord_v$.
\end{definition}

\begin{remark}
For the infinite real places of $F$ we are in \Cref{def:valuations} not really taking $\rmK_1^\MW$ of a field or ring: the notation $\rmK_1^\MW(\calO_v)$ is only suggestive in order to obtain analogs of the unit groups attached to each place of $F$ \cite[I §1]{Gras}. 
For a finite place or complex infinite place $v$ of $F$, however, the above definition is the same as Morel's definition of Milnor--Witt $\rmK$-groups of valuation rings \cite[§3]{Morel-over-a-field}. Morel shows in \cite[Theorem 3.22]{Morel-over-a-field} that with this definition, the Milnor--Witt $\rmK$-theory of $\calO_v$ is generated as a ring by $\eta$ along with the symbols $[u]$ for $u$ a unit of $\calO_v$. It follows that this definition coincides with other definitions of Milnor--Witt $\rmK$-theory of rings given in the literature, for example that of Schlichting in \cite[§4]{Schlichting-euler}.
\end{remark}


\begin{lemma}\label{lemma:K1MWOv}
Let $v\in\Pl_F$ be a place of $F$.
\begin{itemize}
\item If $v$ is either an infinite place or a nondyadic finite place, then  $\rmK_1^\MW(\calO_v)\cong\calO_v^\times$ (where, by definition, $\calO_v^\times\defeq\R^\times_{>0}$ for $v\in\Pl_\infty^\rmr$ and $\calO_v^\times\defeq\C^\times$ for $v\in\Pl_\infty^\rmc$).
\item If $v$ is a dyadic place, then there is a short exact sequence
\[
0\to\rmI^2(F_v)\to\rmK_1^\MW(\calO_v)\to\calO_v^\times\to0.
\]
\end{itemize}
\end{lemma}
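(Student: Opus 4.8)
The plan is to compare the Milnor--Witt residue $\partial_v=\wt\ord_v$ with the classical valuation $\ord_v$ through the homomorphism $p\colon\rmK_1^\MW(F_v)\to\rmK_1^\rmM(F_v)=F_v^\times$ that kills $\eta$, whose kernel is $\rmI^2(F_v)$. I treat the finite places $v\in\Pl_0$ first. The commutative square recorded after \Cref{def:valuations} gives $\rk\circ\partial_v=\ord_v\circ p$, so $p$ carries $\rmK_1^\MW(\calO_v)=\ker\partial_v$ into $\calO_v^\times=\ker\ord_v$ and restricts to a map $p'\colon\rmK_1^\MW(\calO_v)\to\calO_v^\times$. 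This restriction is surjective: for a unit $u$ one has $\partial_v([u])=0$, so $[u]\in\rmK_1^\MW(\calO_v)$ and $p'([u])=u$. Hence the whole finite-place statement reduces to identifying $\ker p'=\ker p\cap\ker\partial_v=\ker\bigl(\partial_v|_{\rmI^2(F_v)}\bigr)$.

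To compute this kernel I run the snake lemma on the map of short exact sequences with vertical maps $p'$, $p$, $\rk$, whose top row is $0\to\rmK_1^\MW(\calO_v)\to\rmK_1^\MW(F_v)\xrightarrow{\partial_v}\GW(k(v),(\frakm_v/\frakm_v^2)^\vee)\to0$ (exact by the definition of $\rmK_1^\MW(\calO_v)$ together with surjectivity of the residue \cite{Morel-over-a-field}) and whose bottom row is the valuation sequence of $F_v$. Since $p$ and $\rk$ are surjective, this produces an exact sequence
\[
0\to\ker p'\to\rmI^2(F_v)\xrightarrow{\ \ol\partial\ }\rmI\bigl(k(v),(\frakm_v/\frakm_v^2)^\vee\bigr)\to\coker p'\to0,
\]
where $\ol\partial=\partial_v|_{\rmI^2(F_v)}$ and the target is the twisted fundamental ideal $\ker\rk$. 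Surjectivity of $p'$ forces $\coker p'=0$, so $\ol\partial$ is onto. If $v$ is nondyadic, then $k(v)$ has odd characteristic and $\rmI(k(v))\cong\Z/2$, while the Witt ring of the nondyadic local field $F_v$ has $\rmI^2(F_v)\cong\Z/2$ \cite[p.\ 81]{Milnor-Husemoller}; an epimorphism between groups of order two is an isomorphism, so $\ker p'=0$ and $p'$ is the desired isomorphism $\rmK_1^\MW(\calO_v)\cong\calO_v^\times$. If instead $v$ is dyadic, then $k(v)$ is a finite field of characteristic two, hence perfect, so every unit is a square; thus $\langle u\rangle=1$ in $\GW(k(v))$ for all units $u$ and the fundamental ideal $\rmI(k(v))$ vanishes. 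Then $\ol\partial=0$, which says precisely that $\rmI^2(F_v)\subseteq\ker\partial_v=\rmK_1^\MW(\calO_v)$ and that $\ker p'=\rmI^2(F_v)$; together with the surjectivity of $p'$ this is exactly the short exact sequence claimed.

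For the infinite places I use the splitting $\rmK_1^\MW(F_v)\cong\Lambda_\infty^1\oplus\Lambda_0^1$ from the proof of \Cref{prop:K2MW(Fv)}. A complex place has $\wt\ord_v=0$ and $\rmK_1^\MW(\C)\cong\rmK_1^\rmM(\C)=\C^\times=\calO_v^\times$, giving the claim at once. For a real place, $\wt\ord_v$ is by definition the composite $\rmK_1^\MW(\R)\to\rmI^1(\R)\xrightarrow{\cong}\Z$; it sends the generator $[-1]$ of $\Lambda_\infty^1$ to a generator of $\Z$ and annihilates $\Lambda_0^1$ (every $\pfister{a}$ with $a>0$, and every $\pfister{a,b}$ with $a,b>0$, has signature zero), so $\ker\wt\ord_v=\Lambda_0^1$. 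Finally $p$ is injective on $\Lambda_0^1$, because $\ker p=\rmI^2(\R)$ lands in $\Lambda_\infty^1$ (its generator maps to $-2[-1]$, as computed in the proof of \Cref{prop:K2MW(Fv)}) and meets $\Lambda_0^1$ trivially, while $p(\Lambda_0^1)=\R_{>0}^\times=\calO_v^\times$; hence $p'$ restricts to the desired isomorphism. The decisive step is the nondyadic finite case, where one must certify that the residue does not kill the Witt-theoretic summand $\rmI^2(F_v)=\ker p$: the snake lemma reduces this to the surjectivity of $\ol\partial$, and the entire dichotomy of the lemma is then governed by whether the fundamental ideal $\rmI(k(v))$ of the residue field is $\Z/2$ (nondyadic) or $0$ (dyadic).
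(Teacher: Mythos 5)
Your proof is correct and takes essentially the same route as the paper's: the same commutative diagram comparing the residue sequence $0\to\rmK_1^\MW(\calO_v)\to\rmK_1^\MW(F_v)\xrightarrow{\partial_v}\rmK_0^\MW(k(v))\to0$ with the valuation sequence for $F_v^\times$, the same snake-lemma argument, and the same dichotomy according to whether $\rmI(k(v))$ is $\Z/2$ (nondyadic) or $0$ (dyadic). The only differences are cosmetic: you certify surjectivity of $\rmI^2(F_v)\to\rmI(k(v))$ by first lifting units to get surjectivity of $p'$, where the paper directly asserts that this map of groups of order two is an isomorphism, and you spell out the archimedean case via the $\Lambda_\infty^1\oplus\Lambda_0^1$ decomposition, which the paper dismisses as clear from \Cref{prop:K2MW(Fv)}.
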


\begin{proof}
The statement is clear for the infinite places by \Cref{prop:K2MW(Fv)}. Let $v\in\Pl_0$ be a finite place of $F$, and consider the commutative diagram with exact rows
\[\begin{tikzcd}
0\ar{r} & \rmK_1^\MW(\calO_v)\ar{r}\ar{d}{p'} & \rmK_1^\MW(F_v)\ar{r}{\partial_v}\ar{d}{p} & \rmK_0^\MW(k(v))\ar{r}\ar{d}{p''} & 0\\
0\ar{r} & \calO_v^\times\ar{r} & F_v^\times\ar{r}{\ord_v} & \Z\ar{r} & 0. 
\end{tikzcd}\]
If $v\nmid2$, then the induced map $\ker p\cong \rmI^2(F_v)\cong\Z/2\to\ker p''\cong\rmI(k(v))\cong\Z/2$ is an isomorphism and so we conclude by the snake lemma.
If $v\mid2$, then $k(v)$ is quadratically closed and hence the map $p''\colon\rmK_0^\MW(k(v))\to\Z$ is an isomorphism. In this case the snake lemma applied to the above diagram yields $\ker p'\cong\ker p\cong\rmI^2(F_v)$.
\end{proof}

\section{Topology on \texorpdfstring{$\rmK_1^\MW(F_v)$}{K1MW}}\label{section:topology}

We now aim to put a topology on $\rmK_1^\MW(F_v)$, for each place $v$ of $F$, in such a way that the homomorphism $p\colon\rmK_1^\MW(F_v)\to F_v^\times$ becomes a covering map.

\subsubsection{}In general, suppose that we are given an abelian group $G$ (written additively) along with a subgroup $H$ of $G$ which is a topological group. We can then extend the topology on $H$ to a topology on $G$ as follows. Choose a set theoretical section $s$ of the quotient map $\pi\colon G\to G/H$, and let $G/H$ have the discrete topology. Then $s$ defines a partition
\[
G=\coprod_{x\in G/H}(s(x)+H)
\]
of $G$,
and there is a natural topology on each coset $(s(x)+H)\cong H$ coming from that on $H$. We can then declare a subset $U\subseteq G$ to be open if and only if $U\cap(s(x)+H)$ is open for every $x\in G/H$. This turns $G$  into a topological group.

\subsubsection{}
Now let $v$ be a place of $F$. Then, by taking $n=1$ in the short exact sequence \eqref{eq:ses-with-witt-on-right} we get the exact sequence
\[
0\to F_v^{\times2}\to\rmK_1^\MW(F_v)\to\rmI(F_v)\to0.
\]
In the notations above, we can then let $G\defeq\rmK_1^\MW(F_v)$ and $H\defeq F_v^{\times2}$.
The quotient map $\pi\colon G\to G/H$ is given as $[a]\mapsto\pfister{a}$. Using the set theoretical section $s(\pfister{x})\defeq[x]$ of $\pi$ along with the fact that $F_v^{\times2}$ is a topological group, we obtain by the above a topology on $\rmK_1^\MW(F_v)$. 

\begin{prop}\label{prop:top}For any $v\in\Pl_F$, the map $p\colon\rmK_1^\MW(F_v)\to F_v^\times$ is a covering of topological groups.
Furthermore, if $v$ is a finite place of $F$, then we have the following:
\begin{enumerate}
\item The map $p$ is proper.
\item The space $\rmK_1^\MW(F_v)$ is locally compact and totally disconnected, and $\rmK_1^\MW(\calO_v)$ is compact in $\rmK_1^\MW(F_v)$. 
\end{enumerate}
\end{prop}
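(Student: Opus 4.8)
The plan is to treat $p$ as a homomorphism of topological groups and to invoke the standard principle that a continuous, open, surjective homomorphism of topological groups with discrete kernel is a covering map. Since $p$ kills $\eta$, its kernel is $\rmI^2(F_v)$, so the whole statement will hinge on how the defining extension $0\to F_v^{\times2}\to\rmK_1^\MW(F_v)\to\rmI(F_v)\to0$ interacts with the forgetful sequence $0\to\rmI^2(F_v)\to\rmK_1^\MW(F_v)\xrightarrow{p}F_v^\times\to0$.

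First I would analyze $p$ on the topologized subgroup $H=F_v^{\times2}$. Since $H=\ker\pi$ is exactly the locus where the discriminant $\pi\colon\rmK_1^\MW(F_v)\to\rmI(F_v)$ vanishes, the Morel--Hopkins pullback description of $\rmK_1^\MW(F_v)$ as $\rmI(F_v)\times_{\rmI(F_v)/\rmI^2(F_v)}F_v^\times$ recalled above shows that $p$ carries $H$ bijectively onto the subgroup of squares $F_v^{\times2}\subseteq F_v^\times$, and that on $H$ the map $p$ is precisely this inclusion. As $F_v^{\times2}$ is an open subgroup of finite index in the local (or real) field $F_v^\times$, this exhibits $p|_H$ as a homeomorphism onto an open subgroup. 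Writing $\rmK_1^\MW(F_v)=\coprod_x(s(x)+H)$ as the topological coproduct of its open cosets, on each of which $p$ acts by a translate of $p|_H$, we conclude that $p$ is continuous and open. Finally $p|_H$ is injective, so $\rmI^2(F_v)\cap H=0$ and $\ker p=\rmI^2(F_v)$ injects into the discrete group $\rmI(F_v)$; meeting each open coset in at most one point, it is discrete. Hence $p$ is a covering of topological groups for every $v$.

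Now let $v$ be finite. Since $\rmI^3(F_v)=0$ one has $\rmI^2(F_v)\cong\Z/2$, so $\ker p$ is finite and $p$ is a finite-sheeted covering; properness in (1) then follows because a finite covering of a locally compact Hausdorff space pulls compacta back to compacta (a finite cover of a compact set has compact total space). Local compactness and total disconnectedness in (2) come from the coproduct description: each coset is homeomorphic to the open subgroup $F_v^{\times2}\subseteq F_v^\times$, which is locally compact, Hausdorff and totally disconnected, and these properties pass to a topological coproduct. For the compactness of $\rmK_1^\MW(\calO_v)=\ker\wt\ord_v=\ker\partial_v$, I would read off from the commutative square relating $\partial_v$, $p$ and $\ord_v$ that $\ker\partial_v\subseteq p^{-1}(\calO_v^\times)$, and that $p^{-1}(\calO_v^\times)$ is compact, being the total space of a finite covering of the compact group $\calO_v^\times$. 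It then suffices to see that $\ker\partial_v$ is closed, which follows once $\partial_v$ is known to be locally constant: it vanishes on the identity neighborhood $\{[a^2]:a\in\calO_v^\times\}$ because residues of unit symbols are trivial, so by the homomorphism property $\partial_v$ is constant on the cosets of an open subgroup, hence continuous into the discrete group $\rmK_0^\MW(k(v))$. A closed subset of the compact set $p^{-1}(\calO_v^\times)$ is compact, giving the claim uniformly in the dyadic and nondyadic cases.

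The main obstacle is the bookkeeping in the second paragraph: one must reconcile the two short exact sequences through which $\rmK_1^\MW(F_v)$ is described---the topology-defining sequence with kernel $F_v^{\times2}$ and the forgetful sequence with kernel $\rmI^2(F_v)$---and in particular check cleanly that $p|_H$ is the inclusion of an open subgroup and that $\ker p$ is discrete. Once $p$ is recognized as a finite covering, properness, local compactness and total disconnectedness are formal; the only remaining point requiring care is the local constancy of $\partial_v$, which is what lets one close up $\rmK_1^\MW(\calO_v)$ inside the compact fibre $p^{-1}(\calO_v^\times)$.
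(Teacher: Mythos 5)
Your proof of the main covering statement is essentially the paper's own: both arguments rest on playing the topology-defining sequence $0\to F_v^{\times2}\to\rmK_1^\MW(F_v)\to\rmI(F_v)\to0$ against the forgetful sequence $0\to\rmI^2(F_v)\to\rmK_1^\MW(F_v)\xrightarrow{p}F_v^\times\to0$, together with the discreteness of $F_v^\times/F_v^{\times2}$; you simply spell out the standard fact (open continuous surjective homomorphism with discrete kernel $\Rightarrow$ covering) that the paper leaves implicit, and your identification of $p|_H$ as the inclusion of the open subgroup $F_v^{\times2}$ via the pullback square is the correct way to make the paper's diagram argument precise. Properness is handled identically (a finite-sheeted covering is proper; the paper notes the covering is two-sheeted since $\rmI^2(F_v)\cong\Z/2$). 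Where you genuinely diverge is part (2). The paper deduces it from \Cref{lemma:K1MWOv} together with ``the properties of the topology on $F_v^\times$'', which implicitly requires a case split: in the dyadic case $\rmK_1^\MW(\calO_v)=p^{-1}(\calO_v^\times)$ is compact by properness, while in the nondyadic case one must still verify that the group isomorphism $\rmK_1^\MW(\calO_v)\cong\calO_v^\times$ is topologically meaningful, i.e.\ that $\rmK_1^\MW(\calO_v)$ is closed in the compact set $p^{-1}(\calO_v^\times)$. Your argument replaces this case analysis by a uniform one: $\partial_v$ vanishes on the identity neighbourhood $\set{[a^2]:a\in\calO_v^\times}$ (residues of unit symbols vanish, and this set is open because $\calO_v^\times\cap F_v^{\times2}=\calO_v^{\times2}$ is open in $F_v^{\times2}$), hence $\partial_v$ is locally constant and its kernel is open-and-closed; compactness then follows from closedness inside $p^{-1}(\calO_v^\times)$. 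This is correct, and it in fact supplies exactly the closedness verification that the paper's citation glosses over in the nondyadic case, so your route for (2) is the more self-contained of the two.
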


\begin{proof}
If $v$ is a complex place, then by \Cref{prop:K2MW(Fv)} there is nothing to show. So we may assume that $v\in\Pl_F^\nc$. We have a commutative diagram with exact rows
\[
\begin{tikzcd}
0\ar{r} & F_v^{\times2}\ar{d}{=}\ar{r} & \rmK_1^\MW(F_v)\ar{d}{p}\ar{r} & \rmI(F_v)\ar{r}\ar{d} & 0\\
0\ar{r} & F_v^{\times2}\ar{r} & F_v^\times\ar{r} & \rmI(F_v)/\rmI^2(F_v)\ar{r} & 0.
\end{tikzcd}
\]
That $p$ is a covering map follows from this diagram along with the fact that the quotient topology on $F_v^\times/F_v^{\times2}\cong\rmI(F_v)/\rmI^2(F_v)$ is the discrete topology.

Now let $v$ be a finite place of $F$. Then $\rmI^2(F_v)\cong\Z/2$, so that we have an exact sequence
\[
0\to\Z/2\to\rmK_1^\MW(F_v)\xrightarrow{p} F_v^\times\to0.
\]
Thus $p$ is in this case a two sheeted covering map, hence proper.
The claim (2) follows from \Cref{lemma:K1MWOv} along with the properties of the topology on $F_v^\times$.
\end{proof}

\section{Idèles}\label{section:ideles}
Recall that the idèle group $\J_F$ of $F$ is the restricted product of the groups of units of the completions $F_v^\times$ with respect to the compact subgroups $\calO_v^\times$, with the restricted product topology. Equivalently, $\J_F$ can be defined as the direct limit $\varinjlim_S\J_F(S)$, where $S$ is a set of places of $F$ and
\[
\J_F(S)\defeq\prod_{v\in S}F_v^\times\times\prod_{v\not\in S}\calO_v^\times.
\]

\begin{definition}
For any finite set $S$ of places of $F$, put
\[
\wt\J_F(S)\defeq\prod_{v\in S}\rmK_1^\MW(F_v)\times\prod_{v\not\in S}\rmK_1^\MW(\calO_v).
\]
The \emph{Milnor--Witt idèle group} $\wt\J_F$ of $F$ is defined as the direct limit
\[
\wt\J_F\defeq\varinjlim_S\wt\J_F(S),
\]
where $S$ ranges over all finite subsets of $\Pl_F$. 
\end{definition}


\begin{prop}\label{prop:wtJses}
There is a short exact sequence
\[
0\to\bigoplus_{v\in\Pl_F}\rmI^2(F_v)\to\wt\J_F\xrightarrow{p}\J_F\to0,
\]
where the map $p$ is induced by the projection maps from Milnor--Witt $\rmK$-theory to Milnor $\rmK$-theory.
\end{prop}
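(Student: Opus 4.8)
The plan is to compute the kernel of $p$ and verify its surjectivity one place at a time, then assemble the answer by passing to the colimit. Both $\wt\J_F$ and $\J_F$ are filtered colimits over the finite sets $S\subseteq\Pl_F$ of the groups $\wt\J_F(S)$ and $\J_F(S)\defeq\prod_{v\in S}F_v^\times\times\prod_{v\notin S}\calO_v^\times$, and $p$ is the colimit of the maps $p_S\colon\wt\J_F(S)\to\J_F(S)$ induced by the projections $p\colon\rmK_1^\MW(F_v)\to F_v^\times$ for $v\in S$ and $p'\colon\rmK_1^\MW(\calO_v)\to\calO_v^\times$ for $v\notin S$. Since filtered colimits of abelian groups are exact, it suffices to understand each $p_S$ and then take $\varinjlim_S$.

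Surjectivity is immediate: the projection $p\colon\rmK_1^\MW(F_v)\to\rmK_1^\rmM(F_v)=F_v^\times$ is onto for every $v$, and $p'\colon\rmK_1^\MW(\calO_v)\to\calO_v^\times$ is onto by \Cref{lemma:K1MWOv} (in fact an isomorphism outside the dyadic places). Hence each $p_S$ is a product of surjections, so is surjective, and therefore so is the colimit $p$.

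For the kernel, recall that $\ker\bigl(p\colon\rmK_1^\MW(F_v)\to\rmK_1^\rmM(F_v)\bigr)=\rmI^2(F_v)$ at every place, whereas by \Cref{lemma:K1MWOv} the kernel of $p'$ vanishes at the infinite and nondyadic finite places and equals $\rmI^2(F_v)$ at the (finitely many) dyadic places $D\subseteq\Pl_0$. Consequently
\[
\ker p_S\cong\bigoplus_{v\in S\cup D}\rmI^2(F_v),
\]
the factor at $v\in S$ arising from $\ker p$ and the factor at $v\in D\setminus S$ from $\ker p'$, while all remaining places contribute zero. For $S\subseteq S'$ the transition map $\ker p_S\to\ker p_{S'}$ is the inclusion of a subset of summands, and for a dyadic $v$ it is the identity on $\rmI^2(F_v)$, since the inclusion $\rmK_1^\MW(\calO_v)\hookrightarrow\rmK_1^\MW(F_v)$ identifies $\ker p'$ with $\ker p$. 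Because every finite and every real place eventually lies in some $S$, while $\rmI^2(\C)=0$ kills the complex contributions, passing to the colimit gives $\ker p\cong\bigoplus_{v\in\Pl_F}\rmI^2(F_v)$.

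I expect the only delicate point to be this last bookkeeping across the colimit: a nondyadic finite or real place contributes its copy of $\rmI^2(F_v)$ to the kernel only after being absorbed into $S$, whereas a dyadic place already contributes through $\ker p'$, and one must check these two mechanisms together produce exactly one copy of $\rmI^2(F_v)$ per place without overcounting. If one wishes to record the sequence as one of topological groups, the continuity and openness of $p$ follow from \Cref{prop:top}.
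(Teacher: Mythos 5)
Your proof is correct and follows essentially the same route as the paper's: define $p$ as the colimit of the levelwise projections $p_S$, observe surjectivity, identify $\ker p_S\cong\bigoplus_{v\in S\cup\Pl_2}\rmI^2(F_v)$ via \Cref{lemma:K1MWOv}, and pass to the colimit. Your extra care with the transition maps (in particular that the inclusion $\rmK_1^\MW(\calO_v)\hookrightarrow\rmK_1^\MW(F_v)$ identifies $\ker p'$ with $\ker p$ at dyadic places, so nothing is double-counted) makes explicit a point the paper leaves implicit, but it is the same argument.
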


\begin{proof}
For the definition of the homomorphism $p$, note that we have projection maps
$
\wt\J_F(S)\to\J_F(S)
$
for any finite set of places of $F$. Here $\J_F(S)\defeq\prod_{v\in S}F_v^\times\times\prod_{v\not\in S}\calO_v^\times$. The map $p$ is then the induced morphism on the colimit, which we notice is surjective.

Let $\Pl_2\defeq\{v\in\Pl_F:v\mid2\}$ denote the dyadic places of $F$. It follows from \Cref{lemma:K1MWOv} along with the sequence \eqref{eq:fundamental-KMW-sequence} that the kernel of the projection map $\wt\J_F(S)\to\J_F(S)$ is $\bigoplus_{v\in S\cup\Pl_2}\rmI^2(F_v)$. Passing to the colimit as $S$ varies, we thus find that the kernel of the map $\wt\J_F\to\J_F$ is $\bigoplus_{v\in\Pl_F}\rmI^2(F_v)$.
\end{proof}

\begin{definition}
Let $S$ be a finite set of places of $F$. We define a topology on $\wt\J_F(S)$ by taking the topology generated by the sets
\[
W_S\defeq\prod_{v\in S}U_v\times\prod_{v\not\in S}\rmK_1^\MW(\calO_v),
\]
where the $U_v$'s are open subsets of $\rmK_1^\MW(F_v)$ for each $v\in S$. This defines a topology on $\wt\J_F$ via the direct limit topology.
\end{definition}

\begin{lemma}
The Milnor--Witt idèle group $\wt\J_F$ is a locally compact topological group.
\end{lemma}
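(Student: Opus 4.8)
The plan is to follow the classical construction of the idèle group of a number field, substituting the local factors $F_v^\times,\calO_v^\times$ by $\rmK_1^\MW(F_v),\rmK_1^\MW(\calO_v)$ and feeding in \Cref{prop:top} as the local input. First I would observe that, since $F$ has only finitely many infinite places, the finite sets $S$ with $\Pl_\infty\subseteq S$ are cofinal among all finite $S\subseteq\Pl_F$, so that $\wt\J_F=\varinjlim_{S\supseteq\Pl_\infty}\wt\J_F(S)$; this lets me assume every infinite place lies in $S$. The statement then reduces to two assertions: that each $\wt\J_F(S)$ is locally compact, and that for $S\subseteq S'$ the inclusion $\wt\J_F(S)\hookrightarrow\wt\J_F(S')$ is an open topological embedding. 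Granting these, every $\wt\J_F(S)$ is an open locally compact subgroup of the directed union $\wt\J_F$, whence $\wt\J_F$ is locally compact and, being covered by the open topological subgroups $\wt\J_F(S)$, a topological group.

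For local compactness of $\wt\J_F(S)$ I would argue factorwise. For $v\in S$ the group $\rmK_1^\MW(F_v)$ is locally compact: for $v\in\Pl_0$ this is \Cref{prop:top}(2); for $v\in\Pl_\infty^\rmc$ it is the homeomorphism $\rmK_1^\MW(F_v)\cong\C^\times$; and for $v\in\Pl_\infty^\rmr$ it follows from the construction of the topology, under which $\rmK_1^\MW(\R)$ is a disjoint union of cosets each homeomorphic to the locally compact group $\R_{>0}$. For $v\notin S$ the place $v$ is finite, so $\rmK_1^\MW(\calO_v)$ is compact by \Cref{prop:top}(2), and hence $\prod_{v\notin S}\rmK_1^\MW(\calO_v)$ is compact by Tychonoff's theorem. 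As a finite product of locally compact Hausdorff groups times a compact group, $\wt\J_F(S)$ is locally compact.

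The crux is the second assertion, which reduces to showing that $\rmK_1^\MW(\calO_v)$ is open in $\rmK_1^\MW(F_v)$ for every finite place $v$, these being the only coordinates in which $\wt\J_F(S)\hookrightarrow\wt\J_F(S')$ differs from the identity. I would deduce this from \Cref{prop:top} together with the openness of $\calO_v^\times$ in $F_v^\times$: as $p\colon\rmK_1^\MW(F_v)\to F_v^\times$ is a covering, the preimage $p^{-1}(\calO_v^\times)$ is an open subgroup. Compatibility of $\wt\ord_v$ with $\ord_v$ through the rank map gives $\rmK_1^\MW(\calO_v)\subseteq p^{-1}(\calO_v^\times)$, and since $\ker p=\rmI^2(F_v)\cong\Z/2$ while $p$ already maps $\rmK_1^\MW(\calO_v)$ onto $\calO_v^\times$ by \Cref{lemma:K1MWOv}, the subgroup $\rmK_1^\MW(\calO_v)$ has index at most $2$ in $p^{-1}(\calO_v^\times)$. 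Being compact it is closed in the Hausdorff group $p^{-1}(\calO_v^\times)$, and a closed subgroup of finite index is open; hence $\rmK_1^\MW(\calO_v)$ is open in $p^{-1}(\calO_v^\times)$, and therefore in $\rmK_1^\MW(F_v)$.

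With this openness in hand the transition maps are open embeddings, so each $\wt\J_F(S)$ is open in the colimit $\wt\J_F$; local compactness of $\wt\J_F$ then follows because every point lies in some open, locally compact $\wt\J_F(S)$. Continuity of multiplication and inversion is the routine final step: since $\wt\J_F$ is the directed union of the open subgroups $\wt\J_F(S)$, on each of which the operations are continuous, the operations are continuous on $\wt\J_F$ as well. I expect the openness of $\rmK_1^\MW(\calO_v)$ in $\rmK_1^\MW(F_v)$ to be the main obstacle, since without it the directed union fails to be locally compact; everything else parallels the classical idèle construction.
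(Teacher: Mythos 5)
Your proof is correct and takes essentially the same route as the paper, whose entire proof is the one-line observation that $\wt\J_F$ is the restricted product of the locally compact groups $\rmK_1^\MW(F_v)$ with respect to the subgroups $\rmK_1^\MW(\calO_v)$, which are compact for almost all $v$ by \Cref{prop:top}. Your write-up is in fact more careful than the paper's: you explicitly verify that $\rmK_1^\MW(\calO_v)$ is \emph{open} in $\rmK_1^\MW(F_v)$ (compact, hence closed, and of index at most $2$ in the open subgroup $p^{-1}(\calO_v^\times)$), a hypothesis that the standard restricted-product argument genuinely requires but which the paper's citation of compactness alone leaves implicit.
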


\begin{proof}
This follows from the fact that $\wt\J_F$ is the restricted product of the groups $\rmK_1^\MW(F_v)$ with respect to the subgroups $\rmK_1^\MW(\calO_v)$, which are compact for almost all $v$ by \Cref{prop:top}.
\end{proof}

\begin{definition}
Define the map
$
\wt i\colon\rmK_1^\MW(F)\to\wt\J_F
$
by $\wt i([x])\defeq([i_v(x)])_{v\in\Pl_F}$. 
We refer to the cokernel
$
\wt C_F\defeq\coker\wt i
$
as the \emph{Milnor--Witt idèle class group of $F$}.
\end{definition}

\begin{lemma}\label{lemma:wti-injective}
The map $\wt i$ is injective. Hence $\wt C_F=\wt\J_F/\wt i(\rmK_1^\MW(F))$.
\end{lemma}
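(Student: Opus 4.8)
The plan is to realise $\wt i$ as the middle vertical map in a morphism of short exact sequences, and to deduce its injectivity from the injectivity of the two outer verticals by the four lemma. The two outer injectivities are, respectively, the classical embedding of principal idèles (which is essentially free) and the Hasse--Minkowski local--global principle for quadratic forms (which is the real content).

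First I would set up the ladder. On the global side, since $p\colon\rmK_1^\MW(F)\to\rmK_1^\rmM(F)=F^\times$ is surjective with kernel $\rmI^2(F)$ (the case $n=1$ of the description of $\ker p$ recalled in \Cref{section:KMW}), we have the exact sequence
\[
0\to\rmI^2(F)\to\rmK_1^\MW(F)\xrightarrow{p}F^\times\to0.
\]
On the idèlic side, \Cref{prop:wtJses} supplies
\[
0\to\bigoplus_{v\in\Pl_F}\rmI^2(F_v)\to\wt\J_F\xrightarrow{p}\J_F\to0.
\]
The right-hand vertical is the classical idèle embedding $i\colon F^\times\to\J_F$, the middle one is $\wt i$, and the left one is the map induced on kernels. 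Commutativity of the right-hand square is exactly naturality of the ``kill $\eta$'' projection with respect to the base-change homomorphisms $i_{v,*}\colon\rmK_1^\MW(F)\to\rmK_1^\MW(F_v)$. In particular $\wt i$ carries $\ker p=\rmI^2(F)$ into $\ker p=\bigoplus_v\rmI^2(F_v)$, and the induced left-hand vertical is precisely the total restriction map $\rmI^2(F)\to\bigoplus_v\rmI^2(F_v)$ sending a class $\phi$ to the family $(\phi_{F_v})_v$ of its images under field extension.

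With the ladder in place, the four lemma reduces the claim to injectivity of the two outer verticals. The map $i$ is injective because each completion $F\hookrightarrow F_v$ is injective, so a principal idèle that is trivial at even one place is trivial. The map on kernels is the main obstacle: I would deduce its injectivity from the Witt-ring form of the Hasse--Minkowski theorem, namely that the total restriction $\rmW(F)\to\prod_{v\in\Pl_F}\rmW(F_v)$ is injective (the formulation quoted in the introduction, \cite[Corollary 2.4]{Milnor-Husemoller}, which holds over any number field). Restricting this injection along $\rmI^2(F)\subseteq\rmW(F)$ and using that $\rmI^2$ is preserved under field extension, the map $\rmI^2(F)\to\prod_v\rmI^2(F_v)$ is injective. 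Since, by \Cref{prop:wtJses}, our left-hand vertical already lands in the direct sum $\bigoplus_v\rmI^2(F_v)$ (automatic, as that is the kernel appearing in the bottom row), composing with $\bigoplus_v\rmI^2(F_v)\hookrightarrow\prod_v\rmI^2(F_v)$ recovers the total restriction map; hence the map into $\bigoplus_v\rmI^2(F_v)$ is injective as well.

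Applying the four lemma to the ladder then gives that $\wt i$ is injective, and the asserted description $\wt C_F=\wt\J_F/\wt i(\rmK_1^\MW(F))$ follows immediately from the definition of the cokernel. The only nontrivial input is the global--to--local injectivity on $\rmI^2$; everything else is a diagram chase together with the fact that field embeddings are injective.
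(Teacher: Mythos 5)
Your proposal is correct and takes essentially the same approach as the paper: the paper builds the same ladder from \Cref{prop:wtJses}, observes that the induced map on kernels is $\rmI^2(F)\to\bigoplus_{v\in\Pl_F}\rmI^2(F_v)$, and concludes $\ker\wt i=0$ by a diagram chase using the Hasse--Minkowski injectivity of that map together with the injectivity of the classical embedding $F^\times\to\J_F$. Your four-lemma phrasing and the explicit direct-sum-versus-product remark are just a more detailed packaging of the identical argument.
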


\begin{proof}
By \Cref{prop:wtJses}, the kernel of the map $\wt\J_F\to\J_F$ is $\bigoplus_{v\in\Pl_F}\rmI^2(F_v)$. Hence there is a commutative diagram
\[\begin{tikzcd}
& & \rmI^2(F)\ar{r}\ar{d} & \displaystyle{\bigoplus_{v\in\Pl_F}}\rmI^2(F_v)\ar{d} & &\\
0\ar{r} & \ker\wt i\ar{r}\ar{d} & \rmK_1^\MW(F)\ar{r}{\wt i} \ar{d} & \wt\J_F\ar{r}\ar{d}{p} & \wt C_F\ar{r}\ar{d} & 0\\
& 0\ar{r} & F^\times\ar{r}{i} & \J_F\ar{r} &  C_F\ar{r} & 0.
\end{tikzcd}\]
By the Hasse--Minkowski theorem, the map $\rmI^2(F)\to\bigoplus_{v\in\Pl_F}\rmI^2(F_v)$ is injective. It follows from this and a diagram chase that $\ker\wt i=0$. 
\end{proof}

\subsubsection{Idèles of volume zero}
Recall that we have a volume map
$
\vol\colon\J_F\to\R_{>0}^\times
$
defined as $\vol(x)\defeq\prod_{v\in\Pl_F}|x_v|_{F_v}$ \cite[p. 361]{Neukirch}. Here $x=(x_v)_{v\in\Pl_F}\in\J_F$, and $|\cdot|_{F_v}$ is the $v$-adic absolute value on $F_v$. We let $\J_F^0$ denote the kernel of the volume map. By the classical product formula for absolute values \cite[Chapter III, Proposition 1.3]{Neukirch} we have $F^\times\subseteq\J_F^0$. The fact that the resulting quotient group $ C_F^0\defeq\J_F^0/F^\times$ is compact is an equivalent formulation of Dirichlet's unit theorem and the finiteness of the ideal class group \cite[I Proposition 4.2.7]{Gras}.

\begin{definition}\label{def:MW-volume-zero}
Let $p\colon\wt\J_F\to\J_F$ denote the projection map. We define a volume map $\wt\vol\colon\wt\J_F\to\R_{>0}^\times$ on $\wt\J_F$ by $\wt\vol\defeq\vol\circ p$, and put $\wt\J_F^0\defeq\ker(\wt\vol)$. Let $\wt C_F^0$ be the quotient group 
\[
\wt C_F^0\defeq\wt\J_F^0/\wt i(\rmK_1^\MW(F)).
\] 
\end{definition}

\begin{remark}
By \Cref{lemma:wti-injective} along with the product formula for absolute values we have $\wt i(\rmK_1^\MW(F))\subseteq\wt\J_F^0$, justifying the definition of $\wt C_F^0$.
\end{remark}

\begin{prop}\label{prop:vol-zero}
There is a short exact sequence
\[
0\to\Z/2\to\wt C_F^0\to C_F^0\to0.
\]
Hence $\wt C_F^0$ is a compact topological group.
\end{prop}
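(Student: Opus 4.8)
The plan is to compare the Milnor--Witt volume-zero sequence with its classical counterpart via the snake lemma, thereby reducing the entire statement to the computation of a single cokernel of a localization map on $\rmI^2$. First I would restrict the short exact sequence of \Cref{prop:wtJses} to the volume-zero subgroups. Since $\wt\J_F^0=p^{-1}(\J_F^0)$ and the kernel $\bigoplus_v\rmI^2(F_v)$ of $p$ lies inside $\wt\J_F^0$ (these elements map to the identity idèle, which has volume one), restriction gives an exact sequence
\[
0\to\bigoplus_{v\in\Pl_F}\rmI^2(F_v)\to\wt\J_F^0\xrightarrow{p}\J_F^0\to0.
\]
On the source side I would use the tautological sequence $0\to\rmI^2(F)\to\rmK_1^\MW(F)\xrightarrow{p}F^\times\to0$, where the kernel of the projection to Milnor $\rmK$-theory is $\rmI^2(F)$, embedded by $\pfister{a,b}\mapsto\eta[a,b]$. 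These two rows fit into a commutative diagram whose vertical maps are the localization $\alpha\colon\rmI^2(F)\to\bigoplus_v\rmI^2(F_v)$, the map $\wt i$ (which lands in $\wt\J_F^0$), and the classical $i\colon F^\times\to\J_F^0$.

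Next I would apply the snake lemma. All three kernels vanish: $\ker i=0$ is classical, $\ker\wt i=0$ was established above, and $\ker\alpha=0$ is Hasse--Minkowski (the very input used for $\ker\wt i=0$, since $\alpha$ is the restriction of $\wt i$ to $\rmI^2(F)$). The relevant cokernels are $\coker\wt i=\wt C_F^0$ and $\coker i=C_F^0$ by definition, together with $\coker\alpha=\big(\bigoplus_v\rmI^2(F_v)\big)/\im\alpha$. Hence the snake sequence collapses to
\[
0\to\coker\alpha\to\wt C_F^0\to C_F^0\to0,
\]
so everything reduces to identifying $\coker\alpha$.

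The key step, and the main obstacle, is to show $\coker\alpha\cong\Z/2$. Here I would invoke the reciprocity exact sequence for the square of the fundamental ideal of a number field,
\[
0\to\rmI^2(F)\xrightarrow{\alpha}\bigoplus_{v\in\Pl_F}\rmI^2(F_v)\xrightarrow{\epsilon}\Z/2\to0,
\]
where $\rmI^2(F_v)$ is $0$ at complex places, $\Z/2$ at finite places and $\Z$ at real places, and $\epsilon$ sums the local Hasse invariants, reduced modulo $\rmI^3$ (that is, modulo $2$) at the real places. Injectivity of $\alpha$ is Hasse--Minkowski; the inclusion $\im\alpha\subseteq\ker\epsilon$ is Hilbert reciprocity (the product formula for the quadratic Hilbert symbols, archimedean places included); and $\epsilon$ is surjective by any single finite place. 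I expect the crux to be the reverse inclusion $\ker\epsilon\subseteq\im\alpha$, the Hasse principle for forms in $\rmI^2$, where one must check that the \emph{integer}-valued real components (where $\rmI^2(F_v)=\Z$, not $\Z/2$) are all realized globally. This follows from surjectivity of the signature map $\rmI^2(F)\to\bigoplus_{v\in\Pl_\infty^\rmr}\Z$ (prescribing signatures at the real places, e.g.\ choosing a Pfister form $\pfister{a,b}$ with $a,b$ negative at a chosen real place and positive at the others by weak approximation), combined with Hasse--Minkowski realization at the finite places subject to the single reciprocity relation. Granting this, $\coker\alpha=\im\epsilon=\Z/2$, which yields the desired sequence $0\to\Z/2\to\wt C_F^0\to C_F^0\to0$.

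Finally, this sequence exhibits $\wt C_F^0$ as an extension of the compact group $C_F^0$ (compact by Dirichlet's unit theorem and finiteness of the class number) by the finite group $\Z/2$. The maps are continuous for the quotient topology inherited from $\wt\J_F^0$, and a topological group possessing a compact closed normal subgroup with compact quotient is itself compact; hence $\wt C_F^0$ is compact.
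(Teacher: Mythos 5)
Your proposal is correct, and its first half coincides with the paper's own proof: the paper likewise restricts the sequence of \Cref{prop:wtJses} to the volume-zero subgroups, compares with the classical sequence $0\to F^\times\to\J_F^0\to C_F^0\to0$, and uses the snake lemma to reduce the whole statement to showing that the cokernel of the localization map $\alpha\colon\rmI^2(F)\to\bigoplus_{v\in\Pl_F}\rmI^2(F_v)$ is $\Z/2$. The difference lies in how that cokernel is computed. The paper filters by the third power of the fundamental ideal: it applies the snake lemma to the rows $0\to\rmI^3\to\rmI^2\to\rmI^2/\rmI^3\to0$ over $F$ and over the completions, quoting \Cref{lemma:I3} (the map on $\rmI^3$ is an isomorphism) and \cite[Lemma A.1]{Milnor-quad-forms} (the map on $\rmI^2/\rmI^3$ has cokernel $\Z/2$). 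You instead prove exactness of $0\to\rmI^2(F)\xrightarrow{\alpha}\bigoplus_v\rmI^2(F_v)\xrightarrow{\epsilon}\Z/2\to0$ directly, thereby re-deriving by hand the combined content of those two lemmas: injectivity is Hasse--Minkowski, $\im\alpha\subseteq\ker\epsilon$ is the Hilbert product formula, surjectivity of $\epsilon$ is clear, and for $\ker\epsilon\subseteq\im\alpha$ you first realize the integer-valued real components via signature surjectivity (weak approximation, the same device as in the paper's proof of \Cref{lemma:I3}) and then realize the remaining finite components, an even number of which are nontrivial. Both routes are sound; yours has the merit of exhibiting the $\Z/2$ concretely as the image of the sum-of-local-Hasse-invariants map, while the paper's is shorter and reuses \Cref{lemma:I3}, which it needs again in \Cref{prop:MWmoore}. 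One attribution to repair: your final realization step is not ``Hasse--Minkowski realization'' --- Hasse--Minkowski is the injectivity (local-global) statement --- but the global existence theorem for quadratic forms with prescribed local invariants, equivalently the existence of a quaternion algebra ramified at exactly a prescribed even set of noncomplex places; this is a standard but genuinely class-field-theoretic input, and it is precisely what \cite[Lemma A.1]{Milnor-quad-forms} packages. With that citation fixed, your argument is complete, including the concluding extension-of-compact-by-compact argument for compactness of $\wt C_F^0$.
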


\begin{proof}
By definition, the kernel $\bigoplus_{v\in\Pl_F}\rmI^2(F_v)$ of the projection map $p\colon \wt\J_F\to\J_F$ is contained in $\wt\J_F^0$. Consider the commutative diagram with exact rows
\[\begin{tikzcd}
0\ar{r} & \rmK_1^\MW(F)\ar{r}{\wt i}\ar{d} & \wt\J_F^0\ar{r}\ar{d}{p} & \wt C_F^0\ar{r}\ar{d} & 0\\
0\ar{r} & F^\times\ar{r}{i} & \J_F^0\ar{r} &  C_F^0\ar{r} & 0.
\end{tikzcd}\]
By the snake lemma it suffices to show that the cokernel of the map $\rmI^2(F)\to\bigoplus_{v\in\Pl_F}\rmI^2(F_v)$ is $\Z/2$. But this follows from the snake lemma applied to the commutative diagram with exact rows
\[\begin{tikzcd}
0\ar{r} & \rmI^3(F)\ar{r}\ar{d}{\iota}[swap]{\cong} & \rmI^2(F)\ar{r}\ar{d} & \rmI^2(F)/\rmI^3(F)\ar{r}\ar{d} & 0\\
0\ar{r} & \displaystyle{\bigoplus_{v\in\Pl_F}\rmI^3(F_v)}\ar{r} & \displaystyle{\bigoplus_{v\in\Pl_F}\rmI^2(F_v)}\ar{r} & \displaystyle{\bigoplus_{v\in\Pl_F}\rmI^2(F_v)/\rmI^3(F_v)}\ar{r} & 0,
\end{tikzcd}\]
using the fact that the left hand vertical map $\iota$ is an isomorphism by \Cref{lemma:I3} below, and that the cokernel of the right hand vertical map is $\Z/2$ by \cite[Lemma A.1]{Milnor-quad-forms}.
\end{proof}

\begin{lemma}\label{lemma:I3}
For any number field $F$, the canonical map 
$
\iota\colon \rmI^3(F)\to\bigoplus_{v\in\Pl_F}\rmI^3(F_v)
$
induced by the embeddings $i_v\colon F\hookrightarrow F_v$ is an isomorphism.
\end{lemma}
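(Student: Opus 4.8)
The plan is to reduce the target to a finite direct sum supported on the real places, then prove injectivity via Hasse--Minkowski and surjectivity by exhibiting explicit Pfister forms hitting the local generators.

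First I would collapse the right-hand side. As recalled in the proof of \Cref{prop:K2MW(Fv)}, one has $\rmI^3(F_v)=0$ for every finite place $v$, and likewise $\rmI^3(F_v)=0$ at every complex place since $\rmI(\C)=0$. Hence the only nonvanishing summands occur at the finitely many real places, where $\rmI^3(\R)\cong\Z$ is freely generated by $\pfister{-1,-1,-1}=8\ip1$. Thus $\iota$ is automatically well defined as a homomorphism into the \emph{finite} direct sum
\[
\bigoplus_{v\in\Pl_F}\rmI^3(F_v)=\bigoplus_{v\in\Pl_\infty^\rmr}\rmI^3(\R)\cong\Z^{r_1},
\]
whose components are the normalized signatures at the real places; in particular there is no support condition to check.

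For injectivity, suppose $\phi\in\rmI^3(F)$ lies in $\ker\iota$. Then $\phi_v=0$ in $\rmI^3(F_v)\subseteq\rmW(F_v)$ at each real place, while $\phi_v=0$ holds automatically at the finite and complex places because $\rmI^3(F_v)=0$ there. Therefore $\phi$ maps to $0$ in $\rmW(F_v)$ for every $v\in\Pl_F$, and the Hasse--Minkowski local--global principle forces $\phi=0$.

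For surjectivity, I would realize each generator of $\Z^{r_1}$ by a single $3$-fold Pfister form. Fix a real place $v_0$. By weak approximation the diagonal image of $F$ in $\prod_{v\in\Pl_\infty^\rmr}F_v\cong\R^{r_1}$ is dense, so there exists $a\in F^\times$ that is negative at $v_0$ and positive at every other real place. Since $\pfister{a}$ has signature $2$ at a real place exactly when $a$ is negative there and $0$ otherwise, the form $\pfister{a,a,a}=\pfister{a}^{\otimes3}\in\rmI^3(F)$ has signature $8$ at $v_0$ and $0$ at all other real places; that is, $\iota(\pfister{a,a,a})$ is the generator $\pfister{-1,-1,-1}$ of the $v_0$-summand and vanishes elsewhere. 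Letting $v_0$ range over $\Pl_\infty^\rmr$ yields a generating set of the target, so $\iota$ is surjective, hence an isomorphism. The essential inputs are the vanishing of $\rmI^3$ at the non-real places, which is what distinguishes the case $n=3$ from the failure at $n=1,2$ seen in \Cref{prop:MW(Q)}, together with Hasse--Minkowski; the only point requiring care is the signature bookkeeping that identifies $\rmI^3(\R)$ with $\Z$ so that the chosen Pfister forms land exactly on the generators.
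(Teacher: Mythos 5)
Your proof is correct and takes essentially the same route as the paper: injectivity from Hasse--Minkowski, reduction to the real places since $\rmI^3(F_v)=0$ at finite and complex places, and surjectivity by using approximation to find $a\in F^\times$ negative at exactly one real place and hitting the corresponding generator with an explicit $3$-fold Pfister form. The only cosmetic differences are your choice of $\pfister{a,a,a}$ where the paper uses $\pfister{-1,-1,a}$ (both have signature $8$ at the chosen place and $0$ at the others), and your appeal to weak rather than strong approximation, which indeed suffices.
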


\begin{proof}
The map is injective by the Hasse--Minkowski theorem. We must show that it is surjective.

If $v\in\Pl_0\cup\Pl_\infty^\rmc$, then $\rmI^3(F_v)=0$ and there is nothing to show. On the other hand, if $v$ is an infinite real place, then $\rmI^3(F_v)\cong\Z$ by \cite[III Corollary 2.7]{Milnor-Husemoller}. By strong approximation \cite[p. 193]{Neukirch} we can find an element $a\in F$ which is negative in the $i$-th ordering on $F$ and positive otherwise. Then $\pfister{-1,-1,a}\in \rmI^3(F)$ maps to the $i$-th unit vector in $\bigoplus_{v\in\Pl^{\rmr}} \rmI^3(F_v)$.
\end{proof}

\begin{remark}
One can speculate on whether there is a variant of the abelianized étale fundamental group of $\Spec(\calO_F)$ which is the recipient of a reciprocity map defined on the Milnor--Witt idèle class group $\wt C_F$ and which lifts the classical Artin reciprocity map $\rho\colon C_F\to\Gal(L/F)^{\mathrm{ab}}$. 

\end{remark}

\section{A Moore reciprocity sequence for Milnor--Witt \texorpdfstring{$\rmK$}{K}-theory}\label{section:Moore}

The classical result of Moore on uniqueness of reciprocity laws states that there is an exact sequence 
\[
0\to \WK_2(F)\to\rmK_2(F)\xrightarrow{h}\bigoplus_{v\in\Pl_F^\nc}\mu(F_v)\xrightarrow{\pi}\mu(F)\to0.
\]
Here $h$ denotes the global Hilbert symbol, and the group $\WK_2(F)$ is known as the \emph{wild kernel} \cite[II §7]{Gras}. Moreover, the map $\pi$ is defined as 
\[
\pi((\zeta_v)_v)\defeq\prod_{v\in\Pl_F^\nc}\zeta_v^{m_v/m},
\]
where $m_v\defeq\#\mu(F_v)$ and $m\defeq\#\mu(F)$.
In this section we will show that also $\rmK_2^\MW(F)$ fits into a similar exact sequence.

\subsection{Hilbert symbols}\label{section:Hilbert}
In order to obtain a Moore reciprocity sequence for $\rmK_2^\MW$, we first need to define Hilbert symbols in the setting of Milnor--Witt $\rmK$-theory. These should be particular instances of maps of the following type:

\begin{definition}
Let $A$ be a $\GW(F)$-module. A \emph{Milnor--Witt symbol on $F$ with values in $A$} is a $\GW(F)$-bilinear map
\[
(-,-)\colon\rmK_1^\MW(F)\times\rmK_1^\MW(F)\to A
\]
satisfying $([a],[1-a])=0$ for all $a\in F^\times\setminus\{1\}$.
\end{definition}

\begin{remark}
Note that any abelian group $A$ is also a $\GW(F)$-module via the rank map $\rk\colon\GW(F)\to \Z$. Thus we should think of the definition of a Milnor--Witt symbol as a lift of the classical notion of a \emph{symbol}, i.e., a $\Z$-bilinear map $(-,-)\colon F^\times\times F^\times\to A$ satisfying the Steinberg relation \cite{Tate-symbols}. Just as $\rmK_2(F)$ is the universal object with respect to symbol maps, it follows from \cite[Remark 3.2]{Morel-over-a-field} that $\rmK_2^\MW(F)$ is the universal object with respect to Milnor--Witt symbols.
\end{remark}

\begin{definition}\label{def:Bv}
For any place $v$ of $F$, let $B_v$ be the group 
\[
B_v\defeq\begin{cases}\mu(F_v), & v\in\Pl_0\\
\Z, & v\in\Pl_\infty^{\rmr}\\
0, & v\in\Pl_\infty^\rmc.\end{cases}
\]
Furthermore, define a map $q_v\colon B_v\to\mu(F_v)$ by letting $q_v$ be the identity if $v\in\Pl_0$; reduction modulo $2$ if $v\in\Pl_\infty^{\rmr}$; or the trivial homomorphism if $v\in\Pl_\infty^\rmc$. Finally, write $q\defeq\bigoplus_{v\in\Pl_F^\nc}q_v$.
\end{definition}

\subsubsection{}By \Cref{prop:K2MW(Fv)} we have $\rmK_2^\MW(F_v)\cong B_v\oplus A_v$ for each $v\in\Pl_F^\nc$. Thus we can define, for any noncomplex place $v$ of $F$, the homomorphism $b_v\colon\rmK_2^\MW(F_v)\to B_v$ as the composition of the isomorphism $\rmK_2^\MW(F_v)\cong B_v\oplus A_v$ followed by the projection $B_v\oplus A_v\to B_v$. Thus, if $v$ is a finite place of $F$ then $b_v$ is just the classical local Hilbert symbol, while for $v\in\Pl_\infty^\rmr$ the map $b_v$ is the signature homomorphism \eqref{eq:signature-hom}. 

\begin{definition}
For each $v\in\Pl_F^\nc$, let $h_v^{\MW}\colon\rmK_2^\MW(F)\to B_v$ denote the composite
\[
\rmK_2^{\MW}(F)\xrightarrow{i_v} \rmK_2^{\MW}(F_v)\xrightarrow{b_v} B_v,
\]
where the first map is induced by the embedding $i_v\colon F\hookrightarrow F_v$. Moreover, let 
\[h^{\MW}\colon \rmK_2^{\MW}(F)\to \bigoplus_{v\in\Pl_F^{\nc}} B_v\] 
be the map $h^{\MW}\defeq\bigoplus_{v\in\Pl_F^{\nc}}h_v^{\MW}$.
\end{definition}

\begin{definition}Let $v\in\Pl_v^\nc$ be a noncomplex place of $F$. We define the \emph{local Milnor--Witt Hilbert symbol at $v$}, denoted $(-,-)_v^{\MW}$, as the composite
\[
(-,-)_v^{\MW}\colon \rmK_1^{\MW}(F)\times \rmK_1^{\MW}(F)
\to \rmK_2^{\MW}(F)
\xrightarrow{h_v^{\MW}} B_v.
\]
Here the first map is multiplication on Milnor--Witt $\rmK$-theory.
\end{definition}


\begin{lemma}\label{prop:mod2MWHilb}
For any $v\in\Pl_F^\nc$, we have a commutative diagram
\[\begin{tikzcd}
\rmK_1^\MW(F)\times\rmK_1^\MW(F)\ar{r}{(-,-)_v^{\MW}}\ar{d}[swap]{p\times p} & B_v\ar{d}{q_v}\\
F^\times\times F^\times\ar{r}[swap]{(-,-)_v} & \mu(F_v).
\end{tikzcd}\]
\end{lemma}

\begin{proof}
The statement is clear for the finite places, so let $v$ be an infinite real place of $F$. Recall that $(a,b)_v\in\Z/2$ is defined as $0$ if $i_v(a)X^2+i_v(b)Y^2=1$ has a solution in $F_v\cong\R$, and $1$ otherwise \cite[p. 104]{Milnor}. On the other hand, the map \eqref{eq:signature-hom} carries $[a,b]$ to $0$ if any of $i_v(a)$ or $i_v(b)$ is positive in the ordering $v$, and to $1$ otherwise. So $([a],[b])_v^\MW\equiv(a,b)_v\pmod2$.
\end{proof}

\subsubsection{Wild kernels} Using the maps $h_v^\MW$ we can define wild kernels for Milnor--Witt $\rmK$-theory similarly as in the classical case:

\begin{definition}
Let $\WK^{\MW}_2(F)$ denote the kernel of the map $h^{\MW}=\bigoplus_{v\in\Pl_F^\nc}h_v^\MW$.
\end{definition}

\subsubsection{}We are now ready to formulate a Moore reciprocity sequence for Milnor--Witt $\rmK$-theory:

\begin{prop}\label{prop:MWmoore}
For any number field $F$, there is an isomorphism 
$\WK_2^\MW(F)\cong\WK_2(F).$ 
Moreover, there is an exact sequence
\[
0\to \WK_2(F)\to \rmK_2^{\MW}(F)\xrightarrow{h^{\MW}}\bigoplus_{v\in\Pl_F^\nc}B_v\to\mu(F)\to0,
\]
where $v$ runs over all noncomplex places of $F$, and where the $B_v$'s are the groups defined in \Cref{def:Bv}.
\end{prop}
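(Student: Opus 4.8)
The plan is to deduce the Milnor--Witt Moore sequence from the classical one by comparing the two sequences along the projection $p\colon\rmK_2^\MW(F)\to\rmK_2(F)$, whose kernel is $\rmI^3(F)$. Concretely, I would assemble the commutative ladder of short exact sequences
\[
\begin{tikzcd}
0\ar{r} & \rmI^3(F)\ar{r}\ar{d}{\alpha} & \rmK_2^\MW(F)\ar{r}{p}\ar{d}{h^\MW} & \rmK_2(F)\ar{r}\ar{d}{h} & 0\\
0\ar{r} & \ker q\ar{r} & \displaystyle\bigoplus_{v\in\Pl_F^\nc}B_v\ar{r}{q} & \displaystyle\bigoplus_{v\in\Pl_F^\nc}\mu(F_v)\ar{r} & 0
\end{tikzcd}
\]
and then read off the result from the snake lemma. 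The top row is the degree-$2$ instance of the defining sequence $0\to\rmI^{n+1}(F)\to\rmK_n^\MW(F)\to\rmK_n^\rmM(F)\to0$; the bottom row is exact since $q$ is surjective, being the identity on the finite components and reduction modulo $2$ on the real components. The right-hand square commutes, i.e.\ $q\circ h^\MW=h\circ p$: as $\rmK_2^\MW(F)$ is generated by symbols $[a][b]$, this is exactly the content of the corollary to \Cref{prop:mod2MWHilb}. It follows that $h^\MW$ carries $\rmI^3(F)=\ker p$ into $\ker q$, and this corestriction is the left-hand vertical map $\alpha$.

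The crux is to prove that $\alpha$ is an isomorphism. First I would identify the target as $\ker q=\bigoplus_{v\in\Pl_\infty^\rmr}2\Z$, since $q_v$ is injective at the finite places and is reduction modulo $2$ at each real place. On the source, \Cref{lemma:I3} gives $\rmI^3(F)\cong\bigoplus_{v\in\Pl_\infty^\rmr}\rmI^3(F_v)$, because $\rmI^3(F_v)=0$ at all finite and complex places; moreover this identification is induced by the very same inclusions $i_v$ that define $h^\MW$, so $\alpha$ is diagonal and it suffices to treat one real place at a time. At a real place the relevant local map is $b_v$ restricted to $\rmI^3(F_v)\cong\rmI^3(\R)$. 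Using the computation $\eta[-1]^{3}=-2[-1]^2$ recorded in the proof of \Cref{prop:K2MW(Fv)}, the generator $\pfister{-1,-1,-1}$ of $\rmI^3(\R)$ is sent to $-2$ times the generator $[-1,-1]$ of $B_v\cong\Z$. Thus each local map is multiplication by $\pm2$, hence an isomorphism onto $\ker q_v=2\Z$, and $\alpha$ is an isomorphism.

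With $\alpha$ an isomorphism the snake lemma gives $\ker\alpha=\coker\alpha=0$, whence $\ker h^\MW\cong\ker h$ and $\coker h^\MW\cong\coker h$. The first isomorphism is precisely $\WK_2^\MW(F)\cong\WK_2(F)$. For the second, the classical Moore sequence identifies $\coker h$ with $\mu(F)$ via $\pi$, so $\coker h^\MW\cong\mu(F)$ as well, the induced surjection $\bigoplus_{v}B_v\to\mu(F)$ being $\pi\circ q$. Splicing the inclusion $\WK_2(F)\cong\WK_2^\MW(F)=\ker h^\MW\hookrightarrow\rmK_2^\MW(F)$, the map $h^\MW$, and this surjection then yields the asserted four-term exact sequence. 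The only genuinely delicate point is the isomorphism $\alpha$ of the second paragraph; once the ladder is in place, everything else is formal.
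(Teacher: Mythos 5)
Your proof is correct and takes essentially the same route as the paper: both compare $h^{\MW}$ with the classical Moore sequence along the projection $p$ with kernel $\rmI^3(F)$, and both reduce the statement to \Cref{lemma:I3} via a snake-lemma/diagram chase. If anything, yours is slightly more explicit, since you spell out the local computation identifying the image of $\rmI^3(\R)$ in $B_v\cong\Z$ with $2\Z=\ker q_v$ (via $\pfister{-1,-1,-1}\mapsto\eta[-1]^3=-2[-1]^2$), a point the paper's diagram leaves implicit in equating $\ker q$ with $\bigoplus_{v}\rmI^3(F_v)$.
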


\begin{proof}
Consider the following commutative diagram with exact rows:
\[
\begin{tikzcd}
& \ker p'\ar{d}\ar{r} & \rmI^3(F)\ar{d}\ar{r}{\iota}[swap]{\cong} & {\displaystyle\bigoplus_{v\in\Pl_F^\nc}}\rmI^3(F_v)\ar{r}\ar{d} & \ker q'\ar{d} & \\
0\ar{r} & \WK_2^{\MW}(F)\ar{r}\ar{d}{p'} & \rmK_2^{\MW}(F)\ar{r}{h^{\MW}}\ar{d}{p} & {\displaystyle\bigoplus_{v\in\Pl_F^\nc}}B_v\ar{r}\ar{d}{q} & \coker h^{\MW}\ar{d}{q'}\ar{r} & 0\\
0\ar{r} & \WK_2(F)\ar{r} & \rmK_2(F)\ar{r}{h} &{\displaystyle\bigoplus_{v\in\Pl_F^\nc}}\mu(F_v)\ar{r}{\pi} & \mu(F)\ar{r} & 0.
\end{tikzcd}
\]
Here $p'$ and $q'$ are induced from $p$ and $q$, respectively. According to \Cref{lemma:I3}, the map $\iota\colon\rmI^3(F)\to\bigoplus_{v\in\Pl_F^\nc}\rmI^3(F_v)$ is an isomorphism. It follows from this and a diagram chase that $p'$ and $q'$ are isomorphisms. 
%
%
\end{proof}

\begin{example}
We see from \Cref{prop:MW(Q)} that in the case $F=\Q$, the exact sequence of \Cref{prop:MWmoore} reads
\[
0\to\Z\oplus\bigoplus_{p\text{ prime}}\F_p^\times\to\Z\oplus\Z/2\oplus\bigoplus_{p\text{ prime}}\F_p^\times\to\Z/2\to0.
\]
Here we have used that $\WK_2(\Q)=0$.
\end{example}

\section{Regular kernels and Milnor--Witt \texorpdfstring{$\rmK$}{K}-theory of rings of integers}\label{section:K2}

\subsection{} We will now consider various kernels of the Hilbert symbols and the tame symbols. Recall that in classical $\rmK$-theory, there are three subgroups of $\rmK_2(F)$ of particular interest:
\[\begin{tikzcd}
\rmK_2(\calO_F)=\ker\p*{\partial\colon\rmK_2(F)\to\displaystyle{\bigoplus_{v\in\Pl_0}}k(v)^\times}\\
\rmK_2^{+}(\calO_F)\defeq\ker\p*{\displaystyle{\bigoplus_{v\in\Pl_\infty^\rmr}}h_v\big|_{\rmK_2(\calO_F)}\colon\rmK_2(\calO_F)\to\displaystyle{\bigoplus_{v\in\Pl_\infty^\rmr}}\mu(F_v)}\arrow[draw=none]{u}[sloped,auto=false]{\subseteq}\\
\WK_2(F)\defeq\ker\p*{h\colon\rmK_2(F)\to\displaystyle{\bigoplus_{v\in\Pl_F^\nc}}\mu(F_v)}\arrow[draw=none]{u}[sloped,auto=false]{\subseteq}.
\end{tikzcd}\]
We have already encountered the wild kernel $\WK_2(F)$, and it is a classical result of Quillen \cite[§5]{Quillen-higher-K} that $\rmK_2(\calO_F)$ is the kernel of the tame symbols $\partial$.
The group $\rmK_2^{+}(\calO_F)$ was introduced by Gras in \cite{Gras-reg} and is referred to as the \emph{regular kernel}, or the \emph{narrow $\rmK_2$-group}. It is a modification of $\rmK_2(\calO_F)$ that takes into account also the real places of $F$.

\begin{definition}\label{def:reg-kernels}
For any $n\ge1$,
let $\rmK_n^\MW(\calO_F)$ denote the kernel of 
\[
\partial\defeq\bigoplus_{v\in\Pl_0}\partial_v^{\pi_v}\colon\rmK_n^\MW(F)\to\bigoplus_{v\in\Pl_0}\rmK_{n-1}^\MW(k(v)).
\]
Here $\pi_v$ is any choice of uniformizer for the discrete valuation $v$.
Moreover, we let
\[
\rmK_n^{\MW,+}(\calO_F)\defeq\ker\p*{\rmK_n^\MW(\calO_F)\to\Z^{r_1}};\quad
\rmK_n^{\MW,+}(F)\defeq\ker\p*{\rmK_n^\MW(F)\to\Z^{r_1}},
\]
where the maps are given by the signatures with respect to the orderings on $F$, as defined in \eqref{eq:signature-hom}.
\end{definition}

\begin{remark}
By \cite[Proposition 3.17 (3)]{Morel-over-a-field}, the kernel of $\partial_v^{\pi_v}$ is independent of the choice of uniformizer $\pi_v$.
\end{remark}

\begin{example}
By \Cref{prop:MW(Q)} we have $\rmK_n^\MW(\Z)\cong\Z$ for all $n\ge1$.
\end{example}

\begin{remark}
If $S$ is a set of places of $F$ containing the infinite places, note that we can also define Milnor--Witt $\rmK$-theory of the ring of $S$-integers $\calO_{F,S}$ in $F$ as
\[
\rmK_n^\MW(\calO_{F,S})\defeq\ker\p*{\bigoplus_{v\not\in S}\partial_v^{\pi_v}\colon\rmK_n^\MW(F)\to\bigoplus_{v\not\in S}\rmK_{n-1}^\MW(k(v))}.
\]
Here $\pi_v$ is a uniformizer for the place $v$. For $n=2$, the groups $\rmK_2^\MW(\calO_{F,S})$ were also considered in \cite{Hutchinson-SL2}. More precisely, Hutchinson defines a subgroup $\wt\rmK_2(2,\calO_{F,S})$ of the second unstable $\rmK$-group $\rmK_2(2,F)$ by
\[
\wt\rmK_2(2,\calO_{F,S})\defeq\ker\p*{\rmK_2(2,F)\to\bigoplus_{v\not\in S}k(v)^\times}.
\]
By \cite[Proposition 3.12]{Hutchinson-SL2} there is a natural isomorphism $\rmK_2(2,F)\xrightarrow{\cong}\rmK_2^\MW(F)$, hence $\wt\rmK_2(2,\calO_{F,S})\cong \rmK_2^{\MW}(\calO_{F,S})$. The main theorem of \cite{Hutchinson-SL2} states that if $S$ is a set of places of $\Q$ containing $2$ and $3$, then $\rmK_2^\MW(\Z_S)\cong\rmH_2(\SL_2(\Z_S),\Z)$ (where $\Z_S$ denotes the localization of $\Z$ at the primes of $S$). In particular, $\rmK_2^\MW(\Z[1/m])\cong\rmH_2(\SL_2(\Z[1/m]),\Z)$ provided $6\mid m$. It is a conjecture that this isomorphism holds for any even $m$.
\end{remark}

\begin{prop}\label{prop:KMWOF}
\begin{enumerate}
\item[(i)] We have a short exact sequence
\[
0\to\Z^{r_1}\to\rmK_2^{\MW}(\calO_F)\xrightarrow{q}\rmK_2(\calO_F)\to0,
\]
where the homomorphism $q$ is induced by the forgetful map from Milnor--Witt $\rmK$-theory to Milnor $\rmK$-theory.
\item[(ii)] We have $\rmK_2^{\MW,+}(\calO_F)\cong\rmK_2^{+}(\calO_F)$ and $\rmK_2^{\MW,+}(F)\cong\rmK_2^{+}(F)$.\label{eq:refone}
\item[(iii)] The groups $\rmK_2^{\MW}(\calO_F)$ and $\rmK_2^{\MW}(F)$ decompose as\label{eq:reftwo}
\[
\rmK_2^{\MW}(\calO_F)\cong\rmK_2^{+}(\calO_F)\oplus\Z^{r_1};\quad\rmK_2^{\MW}(F)\cong\rmK_2^{+}(F)\oplus\Z^{r_1}.
\]
\end{enumerate}
In particular, by Garland's theorem on the finiteness of $\rmK_2(\calO_F)$ \cite{Garland}, the group $\rmK_2^\MW(\calO_F)$ is a finitely generated abelian group of rank $r_1$.
\end{prop}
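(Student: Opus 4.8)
The plan is to play the forgetful map $p\colon\rmK_2^\MW(F)\to\rmK_2(F)$, whose kernel is $\rmI^3(F)\cong\Z^{r_1}$, against the signature map and its classical mod $2$ shadow. Write $s\colon\rmK_2^\MW(F)\to\Z^{r_1}$ for the signature map of \Cref{def:reg-kernels}, whose $v$-th component (for $v\in\Pl_\infty^\rmr$) is the composite \eqref{diag:isos}, so that $\rmK_2^{\MW,+}(F)=\ker s$; and let $h^+\colon\rmK_2(F)\to(\Z/2)^{r_1}=\bigoplus_{v\in\Pl_\infty^\rmr}\mu(F_v)$ be the collection of classical real Hilbert symbols, so that $\rmK_2^+(F)=\ker h^+$. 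Writing $r\colon\Z^{r_1}\to(\Z/2)^{r_1}$ for reduction modulo $2$, \Cref{prop:mod2MWHilb} gives the commutativity $h^+\circ p=r\circ s$, and the same relation holds after restriction to $\rmK_2^\MW(\calO_F)$ and $\rmK_2(\calO_F)$ with $p$ replaced by the map $q$ of the preceding proposition.

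The heart of the matter, and the step I expect to be the main obstacle, is to compute $s$ on $\ker p=\rmI^3(F)$. By \Cref{lemma:I3} this kernel is $\bigoplus_{v\in\Pl_\infty^\rmr}\rmI^3(F_v)\cong\Z^{r_1}$, so it suffices to treat one real place at a time. There the generator of $\rmI^3(\R)\subseteq\rmK_2^\MW(\R)$ is $\eta[-1]^3=-2[-1,-1]$ by the computation carried out in the proof of \Cref{prop:K2MW(Fv)}, and since $s([-1,-1])=\phi(\sigma\pfister{-1,-1})=\phi(4)=1$ by \Cref{rmk:isos}, this generator has signature $-2$. Consequently $s$ restricts to an isomorphism $\rmI^3(F)\xrightarrow{\cong}2\Z^{r_1}=\ker r$; in other words, under the identification $\rmI^3(F)\cong\Z^{r_1}$ the map $s|_{\rmI^3(F)}$ is multiplication by $2$. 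This single factor of $2$ is what drives the entire proposition.

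Granting this, part (1) is a snake lemma. I would apply it to the diagram with exact rows $0\to\ker s\to\rmK_2^\MW(F)\xrightarrow{s}\im s\to0$ and $0\to\ker h^+\to\rmK_2(F)\xrightarrow{h^+}\im h^+\to0$, with vertical maps $p$ on the middle term and $r$ on the right (note $r(\im s)=\im h^+$ since $p$ is surjective, and $2\Z^{r_1}\subseteq\im s$). The induced map on the kernels of the two right-hand verticals is exactly $s|_{\rmI^3(F)}\colon\rmI^3(F)\xrightarrow{\cong}2\Z^{r_1}=\ker(r|_{\im s})$, an isomorphism, while $\coker p=0$; the snake sequence therefore forces the induced map $\rmK_2^{\MW,+}(F)=\ker s\to\ker h^+=\rmK_2^+(F)$ to be an isomorphism. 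Repeating verbatim with $q$ in place of $p$, and again phrasing the rows in terms of the images of the restricted maps, yields $\rmK_2^{\MW,+}(\calO_F)\cong\rmK_2^+(\calO_F)$; working with images rather than with $\Z^{r_1}$ itself is precisely what lets me avoid proving that the integral signature map is surjective.

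For part (2), I would note that $\im s$ and its integral analogue are subgroups of $\Z^{r_1}$ containing $2\Z^{r_1}$, hence of finite index and therefore free abelian of rank $r_1$; the short exact sequences $0\to\rmK_2^{\MW,+}(F)\to\rmK_2^\MW(F)\to\im s\to0$ and its $\calO_F$-version thus split. Combined with part (1) this produces the asserted decompositions $\rmK_2^\MW(F)\cong\rmK_2^+(F)\oplus\Z^{r_1}$ and $\rmK_2^\MW(\calO_F)\cong\rmK_2^+(\calO_F)\oplus\Z^{r_1}$. The final assertion is then immediate: $\rmK_2^+(\calO_F)$ is a subgroup of the finite group $\rmK_2(\calO_F)$ (Garland's finiteness theorem), so $\rmK_2^\MW(\calO_F)$ is finitely generated of rank exactly $r_1$.
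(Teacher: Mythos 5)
Your proof is correct and follows essentially the same route as the paper: the snake lemma applied to the comparison between the Milnor--Witt and classical signature/Hilbert-symbol sequences for part (1), followed by a splitting argument for part (2) and Garland's theorem for the final claim. The only difference is that you make explicit the key computation that the signature map carries $\ker p=\rmI^3(F)$ isomorphically onto $2\Z^{r_1}$ (and work with images $\im s$, $\im h^+$ to sidestep surjectivity questions), a verification the paper's terse proof leaves implicit.
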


\begin{proof}
For the first point, consider the diagram
\[\begin{tikzcd}
0\ar{r} & \rmK_2^\MW(\calO_F)\ar{r}\ar{d}{q} & \rmK_2^\MW(F)\ar{d}{p}\ar{r}{\partial} & \displaystyle{\bigoplus_{v\in\Pl_0}}\rmK_1^\MW(k(v))\ar{d}{\cong}\ar{r} & 0\\
0\ar{r} & \rmK_2(\calO_F)\ar{r} & \rmK_2(F)\ar{r}{\partial} & \displaystyle{\bigoplus_{v\in\Pl_0}}k(v)^\times\ar{r} & 0.
\end{tikzcd}\]
By \Cref{prop:K2MW(Fv)} (i), the right hand vertical map is an isomorphism. Using \Cref{lemma:I3}, the claim follows.

For the second point, first note that we have a commutative diagram with exact rows
\begin{equation}
\begin{tikzcd}\label{eq:diagr-K2MWplus}
0\ar{r} & \rmK_2^{\MW,+}(\calO_F)\ar{r}\ar{d} & \rmK_2^\MW(\calO_F)\ar{r}\ar{d} & \Z^{r_1}\ar{r}\ar{d}& 0\\
0\ar{r} & \rmK_2^{+}(\calO_F)\ar{r}      & \rmK_2(\calO_F)\ar{r}     & (\Z/2)^{r_1}\ar{r} &  0.
\end{tikzcd}
\end{equation}
Here the upper right hand-side map is given by the signature homomorphisms \eqref{eq:signature-hom}; commutativity of the right hand square follows similarly as the proof of \Cref{prop:mod2MWHilb}.
The first claim of (ii) then follows from (i) along with the snake lemma applied to the diagram \eqref{eq:diagr-K2MWplus}; the second claim of (ii) follows similarly.

For (iii), we use (ii) along with the observation that since the right hand-side in the upper short exact sequence of \eqref{eq:diagr-K2MWplus}
is free, the sequence splits. A similar argument works for $\rmK_2^{\MW}(F)$.
\end{proof}

\begin{remark}
In contrast to point (iii) of \Cref{prop:KMWOF}, the corresponding short exact sequence for $\rmK_2(\calO_F)$,
\[
0\to\rmK_2^{+}(\calO_F)\to\rmK_2(\calO_F)\to(\Z/2)^{r_1}\to0,
\]
need not split. For example, \cite[Example 3.10]{Keune-K2} shows that if $F=\Q(\sqrt{14})$, then $\rk_2(\rmK_2^{+}(\calO_F))=1$ while $\rk_2(\rmK_2(\calO_F))=2$. On the other hand, the sequence
\[
0\to\rmK_2^{+}(F)\to\rmK_2(F)\to(\Z/2)^{r_1}\to0
\]
is always split (see \cite[§2.1]{Keune-K2}).
\end{remark}

\subsection{Sample computations of \texorpdfstring{$\rmK_2^\MW(\calO_F)$}{K2MW(OF)}}
Using \Cref{prop:KMWOF} along with similar results for $\rmK_2(\calO_F)$ we deduce some computations of $\rmK_2^\MW(\calO_F)$ for various fields $F$. Below we make use of the calculations of \cite{Belabas} to determine $\rmK_2^\MW(\calO_F)$. 

In the following table, we consider the number fields $F=\Q[x]/(P)$ defined by the polynomial $P$. We let $\Delta_F$ denote the discriminant of $F$, and $(r_1,r_2)$ the signature. 
\begin{center}
\begin{tabular}{l|rlll}
\toprule
 $P$ & $\Delta_F$  & $(r_1,r_2)$  & $\rmK_2(\calO_F)$  & $\rmK_2^\MW(\calO_F)$  \\
\midrule
$x^3+x^2-2x-1$   & $49$ & $(3,0)$  & $(\Z/2)^3$  & $\Z^3$ \\
$x^3+x^2+2x+1$   & $-23$ & $(1,1)$  & $\phantom{(}\Z/2$  & $\Z$ \\
$x^3+x^2+3$      & $-255$ & $(1,1)$ & $\phantom{(}\Z/6$ & $\Z/3\oplus\Z$\\
$x^4-x-1$        & $-283$ & $(2,1)$ & $(\Z/2)^2$ & $\Z^2$ \\
$x^5-x^3-x^2+x+1$ & $1609$ & $(1,2)$ & $\phantom{(}\Z/2$ & $\Z$ \\
\bottomrule
\end{tabular}
\end{center}

\section{Hasse's norm theorem for \texorpdfstring{$\rmK_2^\MW$}{K2MW}}\label{section:Hasse}

Recall that the classical norm theorem of Hasse states that if $L$ is a cyclic extension of the number field $F$, then a nonzero element of $F$ is a local norm at every place if and only if it is a global norm. We can think of this result as a norm theorem for $\rmK_1$.

In \cite{Bak-Rehmann}, Bak and Rehmann extended Hasse's norm theorem to $\rmK_2$. More precisely, they showed that if $L/F$ is any finite extension of number fields, then an element of $\rmK_2(F)$ lies in the image of the transfer map $\tau_{L/F}\colon\rmK_2(L)\to\rmK_2(F)$ if and only if its image in each $\rmK_2(F_v)$ lies in the image of the map 
\[
\bigoplus_{w\mid v}\tau_{L_w/F_v}\colon\bigoplus_{w\mid v}\rmK_2(L_w)\to\rmK_2(F_v);
\]
see \cite[Theorem 1]{Bak-Rehmann}. By \cite[p. 4]{Bak-Rehmann}, this result can be reformulated as the exactness of the sequence
\begin{align}
\rmK_2(L)\xrightarrow{\tau_{L/F}}\rmK_2(F)\xrightarrow{\bigoplus_{v\in\Sigma_{L/F}}h_v}\bigoplus_{v\in\Sigma_{L/F}}\mu(F_v)\to0.\label{exseq:norms}
\end{align}
Here $\Sigma_{L/F}$ denotes the set of infinite real places of $F$ that are complexified in the extension $L/F$, and $h_v$ denotes the classical local Hilbert symbol at $v$.

\subsubsection{}The aim of this section is to show that a similar result as \eqref{exseq:norms} holds also for $\rmK_2^\MW$:

\begin{prop}\label{prop:norm1}
Let $L/F$ be an extension of number fields. Then an element of $\rmK_2^\MW(F)$ lies in the image of the transfer map $\tau_{L/F}\colon\rmK_2^\MW(L)\to\rmK_2^\MW(F)$ if and only if its image in $\rmK_2^\MW(F_v)$ lies in the image of $\bigoplus_{w\mid v}\tau_{L_w/F_v}$ for all $v\in\Pl_F$.
\end{prop}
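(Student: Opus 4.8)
The plan is to imitate the Bak--Rehmann reformulation \eqref{exseq:norms}, reducing the Milnor--Witt statement to the Milnor $\rmK$-theory case via the forgetful map $p\colon\rmK_2^\MW\to\rmK_2$, the decomposition $\rmK_2^\MW(F)\cong\rmK_2^+(F)\oplus\Z^{r_1}$ of \Cref{prop:KMWOF}, and the identification $\ker p\cong\rmI^3$. The easy direction is immediate: if $x=\tau_{L/F}(y)$, then the standard compatibility of the transfer with localization gives $i_v(x)=\sum_{w\mid v}\tau_{L_w/F_v}(i_w(y))$ for every $v$, so $i_v(x)$ lies in the image of $\bigoplus_{w\mid v}\tau_{L_w/F_v}$. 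For the converse I would first determine, place by place, exactly what the local condition ``$i_v(x)\in\im\bigl(\bigoplus_{w\mid v}\tau_{L_w/F_v}\bigr)$'' amounts to.

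The local analysis splits into four cases. At a complex place there is nothing to check. At a finite place $\rmK_2^\MW(F_v)\cong\rmK_2(F_v)$ by \Cref{prop:K2MW(Fv)}, and the transfers are compatible with $p$, so the condition coincides with the Milnor one, which by \cite{Bak-Rehmann} is automatically satisfied. At a real place that is not complexified, some $w\mid v$ has $L_w=F_v=\R$, so the local transfer is the identity and the condition is vacuous. At a complexified real place all $w\mid v$ have $L_w=\C$, and here I would show $\im\tau_{\C/\R}^\MW=\ker b_v=A_v$, the uniquely divisible summand of $\rmK_2^\MW(\R)\cong\Z\oplus A_v$, so that the condition becomes $h_v^\MW(x)=b_v(i_v(x))=0$. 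The inclusion $\im\tau_{\C/\R}^\MW\subseteq\ker b_v$ holds because $\rmK_2^\MW(\C)=\rmK_2(\C)$ is uniquely divisible and $B_v=\Z$ admits no nonzero map from a divisible group; the reverse inclusion I would obtain by lifting the corresponding Milnor statement along $p$ and using that $\ker p$ meets $\ker b_v$ trivially in $\rmK_2^\MW(\R)$, since $\ker p\cong\rmI^3(\R)$ embeds into $B_v=\Z$ as $2\Z$ (as recorded in the proof of \Cref{prop:K2MW(Fv)}). Putting the cases together, and using the compatibility between $h_v^\MW$ and $h_v$ recorded after \Cref{prop:mod2MWHilb}, the total local condition is equivalent to the single requirement that $h_v^\MW(x)=0$ for every real place $v$ complexified in $L/F$.

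It then remains to prove that $h_v^\MW(x)=0$ for all such $v$ forces $x\in\im\tau_{L/F}^\MW$. Applying $p$, the class $p(x)$ is killed by the classical Hilbert symbols at the complexified real places, hence lies in $\im\tau_{L/F}$ by the Bak--Rehmann sequence \eqref{exseq:norms}; lifting a preimage to $\rmK_2^\MW(L)$ along $p$ and subtracting its transfer reduces us to an element $z\in\ker p\cong\rmI^3(F)\cong\Z^{r_1}$ with $h_v^\MW(z)=0$ at every complexified real place. Since the transfer commutes with $p$, it restricts to a map $\rmI^3(L)\to\rmI^3(F)$, and I would finish by showing that this restricted map surjects onto the subgroup of $\rmI^3(F)$ supported, in terms of signatures, on the non-complexified real places: via \Cref{lemma:I3} applied to $L$ one produces a class in $\rmI^3(L)$ with signature $1$ at a single real place $w$ and $0$ elsewhere, and its transfer has signature $\pm1$ at $v=w|_F$ and $0$ at every other real place, because the Scharlau transfer converts signatures at real $w\mid v$ into the signature at $v$ while complex $w$ contribute nothing. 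Since $h_v^\MW(z)=0$ confines $z$ to exactly this subgroup, $z$, and hence $x$, lies in the image of $\tau_{L/F}^\MW$.

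The main obstacle I expect is this last step: controlling the transfer on the $\rmI^3\cong\Z^{r_1}$ summand and verifying that the real places obstructing it are precisely the complexified ones $\Sigma_{L/F}$ appearing in \eqref{exseq:norms}. Concretely, one must check that the induced map $\rmI^3(L)\to\rmI^3(F)$ hits each non-complexified real generator with coefficient $\pm1$, and not merely up to some nonzero integer; this is what makes the ``new'' $\Z^{r_1}$-obstruction align exactly with the $\mu$-obstruction of the Milnor case and allows the global and local statements to be spliced together along $p$.
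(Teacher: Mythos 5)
Your proposal is essentially correct, but it takes a genuinely different route from the paper's. The paper proves the equivalent exact-sequence formulation (\Cref{prop:norm}) by re-running Bak--Rehmann's argument internally to Milnor--Witt $\rmK$-theory: it needs the Milnor--Witt Moore sequence (\Cref{prop:MWmoore}), the identification $\WK_2^\MW(F)\cong\WK_2(F)$ together with divisibility of wild-kernel elements, and an approximation lemma (\Cref{lemma:roots}) patterned on Bak--Rehmann's Lemma 1, after which the conclusion follows as in their Theorem 2. You instead treat Bak--Rehmann's theorem for Milnor $\rmK_2$ as a black box and splice along the extension $0\to\rmI^3(F)\to\rmK_2^\MW(F)\to\rmK_2(F)\to0$: your local analysis (which parallels the paper's remark reducing \Cref{prop:norm1} to \Cref{prop:norm} via \Cref{prop:K2MW(Fv)}) turns the hypothesis into $h_v^\MW(x)=0$ for $v\in\Sigma_{L/F}$, the Milnor-level theorem disposes of $p(x)$, and what remains is a class $z\in\ker p\cong\rmI^3(F)\cong\Z^{r_1}$ supported away from $\Sigma_{L/F}$, to be hit by the transfer restricted to $\rmI^3(L)$. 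Your route buys independence from the Moore sequence and the wild-kernel divisibility arguments; the paper's route keeps everything at the Milnor--Witt level and produces the exact sequence of \Cref{prop:norm} directly, rather than reconstructing it coordinate by coordinate. Both proofs ultimately rest on the same compatibility $i_v\circ\tau_{L/F}=\sum_{w\mid v}\tau_{L_w/F_v}\circ i_w$, which you should state as an explicit input since for Milnor--Witt transfers it is less standard than in the Milnor case.

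The obstacle you flag at the end is genuine as stated, but it resolves more easily than you expect, and without Scharlau's signature formula (whose signs depend on the choice of linear functional). The base-change decomposition you already invoked, combined with the observation that $\tau_{L_w/F_v}=\id$ on $\rmK_2^\MW(\R)$ whenever $v$ and $w$ are both real (the extension $L_w/F_v$ being trivial), gives for $\xi\in\rmI^3(L)$ and any real place $v$ of $F$ that the $v$-coordinate of $\tau_{L/F}(\xi)$ is the sum of the coordinates of $\xi$ over the real $w\mid v$: the complex $w$ contribute nothing because $i_w(\xi)\in\rmI^3(\C)=0$, and one reads off the coordinates via $b_v$, under which $\rmI^3(\R)$ embeds as $2\Z\subseteq\Z$. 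So the coefficients are exactly $+1$, not merely $\pm1$, and the image of $\rmI^3(L)\to\rmI^3(F)$ is precisely the subgroup of tuples vanishing at $\Sigma_{L/F}$, which contains your $z$. This computation is in substance the paper's commutativity lemma (the unnamed lemma following \Cref{def:Bv} in \Cref{section:Hasse}) restricted to $\ker p$, so your argument closes using only facts the paper itself establishes.
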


\subsubsection{}We note that \Cref{prop:norm1} is equivalent to the following assertion: 

\begin{prop}\label{prop:norm}
Let $L/F$ be an extension of number fields. Denote by $\Sigma_{L/F}$ the set of infinite real places of $F$ that are complexified in the extension $L/F$. Then there is an exact sequence
\[
\rmK_2^\MW(L)\xrightarrow{\tau_{L/F}}\rmK_2^\MW(F)\xrightarrow{\bigoplus_{v\in\Sigma_{L/F}}h_v^\MW}\bigoplus_{v\in\Sigma_{L/F}}\Z\to0,
\]
where $\tau_{L/F}$ denotes the transfer map on Milnor--Witt $\rmK$-theory as defined in \eqref{transfer-map}, and the right hand-side map is given by the local Milnor--Witt Hilbert symbols.
\end{prop}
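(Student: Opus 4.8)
The plan is to prove the equivalent \Cref{prop:norm} by comparing the asserted sequence with the classical Bak--Rehmann sequence \eqref{exseq:norms} via the forgetful map $p\colon\rmK_2^\MW\to\rmK_2$. The backbone is the commutative ladder
\[
\begin{tikzcd}[column sep=large]
\rmK_2^\MW(L)\ar{r}{\tau_{L/F}}\ar{d}{p} & \rmK_2^\MW(F)\ar{r}{h^\MW}\ar{d}{p} & \bigoplus_{v\in\Sigma_{L/F}}\Z\ar{d}{q}\\
\rmK_2(L)\ar{r}{\tau_{L/F}} & \rmK_2(F)\ar{r}{h} & \bigoplus_{v\in\Sigma_{L/F}}\mu(F_v),
\end{tikzcd}
\]
whose bottom row is exact with surjective right-hand map. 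Here $h=\bigoplus_\Sigma h_v$, $h^\MW=\bigoplus_\Sigma h_v^\MW$, and $q=\bigoplus q_v$ is reduction modulo $2$; commutativity of the left square is compatibility of transfers with $p$, and commutativity of the right square is the corollary to \Cref{prop:mod2MWHilb} extended from symbols to all of $\rmK_2^\MW(F)$. Before running any chase I would record two facts used throughout: that transfers commute with completion, so that $i_v\circ\tau_{L/F}=\bigl(\bigoplus_{w\mid v}\tau_{L_w/F_v}\bigr)\circ\bigl(\bigoplus_{w\mid v}i_w\bigr)$; and that $\ker p=\rmI^3$, with $\rmI^3(F)\cong\Z^{r_1}$ by \Cref{lemma:I3}.

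Surjectivity of $h^\MW$ is the easy part: for $v\in\Sigma_{L/F}$, weak approximation gives $a\in F^\times$ negative at $v$ and positive at every other real place, and a direct signature computation yields $h_{v'}^\MW([-1,a])=\delta_{vv'}$ for real $v'$, so $[-1,a]$ hits the $v$-th basis vector. For the inclusion $\im\tau_{L/F}\subseteq\ker h^\MW$, fix $v\in\Sigma_{L/F}$; then every $w\mid v$ is complex, so by completion-compatibility and $h_v^\MW=b_v\circ i_v$ the composite $h_v^\MW\circ\tau_{L/F}$ is a sum of maps $b_v\circ\tau_{\C/\R}\colon\rmK_2^\MW(\C)\to\Z$. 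Since $\rmK_2^\MW(\C)=\rmK_2(\C)$ is divisible by \Cref{prop:K2MW(Fv)} and $\Z$ has no nonzero divisible subgroup, each such map vanishes, whence $h^\MW\circ\tau_{L/F}=0$.

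The substance is the reverse inclusion $\ker h^\MW\subseteq\im\tau_{L/F}$. Given $x\in\ker h^\MW$, the right square gives $h(p(x))=q(h^\MW(x))=0$, so $p(x)\in\ker h=\im\tau_{L/F}$ by Bak--Rehmann; lifting a preimage along the surjection $p\colon\rmK_2^\MW(L)\to\rmK_2(L)$ and subtracting its transfer, the left square reduces us to the case $z\defeq x-\tau_{L/F}(\tilde y)\in\ker p=\rmI^3(F)\cong\Z^{r_1}$, with $z\in\ker h^\MW$. On $\rmI^3(F)$ the map $h_v^\MW$ is, via completion, the $v$-th projection $\rmI^3(F)\xrightarrow{\sim}\rmI^3(F_v)=\Z$ followed by $b_v|_{\rmI^3(\R)}$, which carries the generator $\eta[-1]^3=-2[-1,-1]$ to $-2$; hence $\ker\bigl(h^\MW|_{\rmI^3(F)}\bigr)=\Z^{\Pl_\infty^\rmr\setminus\Sigma_{L/F}}$, the summand supported at the real places that stay real in $L$. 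It then suffices to show this summand lies in $\im\tau_{L/F}$: for a real $v\notin\Sigma_{L/F}$ pick a real $w\mid v$, so $L_w=F_v=\R$ and $\tau_{L_w/F_v}$ is an isomorphism on $\rmI^3(\R)$; transferring the generator $g_w\in\rmI^3(L)$ supported at $w$ and reading off completions (all other $w'\mid v'$ contribute $0$) gives $\tau_{L/F}(g_w)=\pm g_v$, the generator of $\rmI^3(F)$ at $v$. Assembling these shows $z\in\im\tau_{L/F}$, hence $x=z+\tau_{L/F}(\tilde y)\in\im\tau_{L/F}$.

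The main obstacle is precisely this last step: controlling the transfer on the Witt-theoretic part $\rmI^3$ and its interaction with the real completions. One must verify that the localized transfer $\tau_{\R/\R}$ for the trivial extension is an isomorphism on $\rmI^3(\R)\cong\Z$ (so that the generators $g_v$ are genuinely hit), that the normalization of $\tau_{L/F}$ is compatible with passage to completions, and that the factor $-2$ in $h_v^\MW|_{\rmI^3}$ is correctly pinned down so as to identify $\ker\bigl(h^\MW|_{\rmI^3(F)}\bigr)$; the easy directions and the diagram chase are then routine.
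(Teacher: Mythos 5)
Your proposal is correct, but it follows a genuinely different route from the paper's. The paper proves \Cref{prop:norm} by re-running Bak--Rehmann's argument \emph{internally} in Milnor--Witt $\rmK$-theory: it first proves a compatibility lemma between the transfers $\tau_{L/F}$ and the symbols $h_v^\MW$ (via the maps $b_{w\mid v}$), then shows that every element of $\WK_2^\MW(F)$ admits $p$-th roots (using $\WK_2^\MW(F)\cong\WK_2(F)$ from \Cref{prop:MWmoore}, the splitting $\rmK_2^\MW(F)\cong\rmK_2^{+}(F)\oplus\Z^{r_1}$ of \Cref{prop:KMWOF}, and Tate's divisibility of the classical wild kernel), deduces the analog of Bak--Rehmann's Lemma~1 (\Cref{lemma:roots}) from the Milnor--Witt Moore sequence, and concludes exactly as in Bak--Rehmann's Theorem~2. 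You instead treat the classical sequence \eqref{exseq:norms} as a black box and reduce to it through the forgetful ladder: commutativity of the two squares (transfer compatibility with $p$, and the corollary to \Cref{prop:mod2MWHilb}) shows that $x\in\ker h^\MW$ implies $p(x)\in\im\tau_{L/F}$, and after subtracting a lifted transfer you are reduced to an element of $\ker p=\rmI^3(F)\cong\Z^{r_1}$ (\Cref{lemma:I3}); your identification of $\ker\bigl(h^\MW|_{\rmI^3(F)}\bigr)$ as the span of the generators $g_v$ at real places $v\notin\Sigma_{L/F}$ (using that $b_v$ sends the generator of $\rmI^3(\R)$ to $-2\neq0$, so the summands at $v\in\Sigma_{L/F}$ inject), together with the computation $\tau_{L/F}(g_w)=\pm g_v$ via a real place $w\mid v$ and $\tau_{\R/\R}=\id$, closes the chase, and your weak-approximation element $[-1,a]$ gives surjectivity. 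What your route buys is independence from the Moore sequence, the wild kernel, and Tate's divisibility theorem --- all the arithmetic is outsourced to \eqref{exseq:norms} --- at the cost of a hands-on analysis of the Witt-theoretic kernel $\rmI^3(F)$; what the paper's route buys is an argument internal to $\rmK_2^\MW$ whose intermediate lemmas (divisibility of $\WK_2^\MW$, \Cref{lemma:roots}) have independent interest and mirror the classical proof. Note finally that both arguments rest on the same transfer compatibilities that the paper itself invokes without proof: base change of Milnor--Witt transfers along completions, $\tau_{L_w/F_v}=\id$ for the trivial extension $\R/\R$, and (in your left-hand square, and implicitly in the paper's local identifications $\rmK_2^\MW(F_v)\cong\rmK_2(F_v)$) the relation $p\circ\tau_{L/F}=\tau_{L/F}\circ p$; so you have not assumed anything beyond what the paper already uses.
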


To show the equivalence of Propositions \ref{prop:norm1} and \ref{prop:norm}, notice first that \Cref{prop:norm1} is equivalent to the assertion that the map $\coker(\tau_{L/F})\to\prod_{v\in\Pl_F}\coker\p*{\bigoplus_{w\mid v}\tau_{L_w/F_v}}$ is injective. It is therefore enough to show that $\coker(\tau_{L_w/F_v})$ is trivial except for $v\in\Sigma_{L/F}$, in which case it is $\Z$. But this follows from \Cref{prop:K2MW(Fv)}  along with the corresponding statement for $\rmK_2$ proved in \cite[p. 4]{Bak-Rehmann}.

\subsubsection{}
Let us proceed with the proof of \Cref{prop:norm}. We follow closely the strategy of Bak and Rehmann \cite{Bak-Rehmann}. Recall from \Cref{def:Bv} the definition of the groups $B_v$ for $v\in\Pl_F$.

\begin{definition}
If $v$ is any place of $F$ and $w\mid v$ is a place of $L$ above $v$, we define a homomorphism
\[
b_{w\mid v}\colon B_w\to B_v
\]
by
\[
b_{w\mid v}\defeq\begin{cases}
0, & v\in\Pl_{F,\infty}^\rmr\text{ and }w\in\Pl_{L,\infty}^\rmc\\
\id, & v\in\Pl_{F,\infty}^\rmr\text{ and }w\in\Pl_{L,\infty}^\rmr\\
n_w/m_v, & v\in\Pl_{F,0}.
\end{cases}
\]
Here $n_w\defeq\#\mu(L_w)$ and $m_v\defeq\#\mu(F_v)$.
\end{definition}

\begin{lemma}
For any place $v$ of $F$, the diagram
\[\begin{tikzcd}
\rmK_2^\MW(L)\ar{d}[swap]{\tau_{L/F}}\ar{r}{\bigoplus_{w\mid v}h^\MW_w} & \displaystyle{\bigoplus_{w\mid v}B_w}\ar{d}{\bigoplus_{w\mid v}b_{w\mid v}}\\
\rmK_2^\MW(F)\ar{r}{h^\MW_v} & B_v
\end{tikzcd}\]
is commutative.
\end{lemma}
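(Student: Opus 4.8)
The plan is to deduce the commutativity from two inputs: a base-change (projection) formula relating the global transfer to the local ones, and a place-by-place compatibility between the local transfers and the projections $b_v$. Recall that by definition $h_v^\MW = b_v\circ i_v$ and $h_w^\MW = b_w\circ i_w$. First I would invoke the compatibility of the Milnor--Witt transfer with completion, namely
\[
i_v\circ\tau_{L/F} = \sum_{w\mid v}\tau_{L_w/F_v}\circ i_w\colon \rmK_2^\MW(L)\to\rmK_2^\MW(F_v),
\]
which should follow from the construction of $\tau_{L/F}$ through the split residue sequence over $\A^1_F$ together with the factorization $L\otimes_F F_v\cong\prod_{w\mid v}L_w$ of the minimal polynomial of a primitive element once it is viewed over $F_v$. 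Granting this, composing with $b_v$ and using $h_v^\MW = b_v\circ i_v$ reduces the lemma to verifying, for each $w\mid v$ separately, the purely local identity
\[
b_v\circ\tau_{L_w/F_v} = b_{w\mid v}\circ b_w\colon \rmK_2^\MW(L_w)\to B_v .
\]

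I would then check this local identity case by case. When $v$ is complex there is nothing to prove, as $B_v=0$. When $v$ is real and $w$ is a real place above it, the extension $L_w/F_v$ is the trivial extension $\R=\R$, so both $\tau_{L_w/F_v}$ and $b_{w\mid v}$ are the identity and the identity is immediate. The first genuinely new case is $v$ real with $w$ complex, i.e.\ $L_w=\C$ and $F_v=\R$. Here $b_w=0$ since $B_w=0$, so the right-hand side vanishes; on the left, $\rmK_2^\MW(\C)\cong\rmK_2^\rmM(\C)$ by \Cref{prop:K2MW(Fv)}, and $\rmK_2^\rmM(\C)$ is divisible (as $\C$ is quadratically closed and $\rmK_2$ of an algebraically closed field is uniquely divisible), whereas $B_v=\Z$ is torsion-free. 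Hence every homomorphism $\rmK_2^\MW(\C)\to\Z$ is zero, so $b_v\circ\tau_{\C/\R}=0$ and both sides agree.

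For the remaining case, $v$ finite (so every $w\mid v$ is finite), I would reduce to Milnor $\rmK$-theory. By \Cref{prop:K2MW(Fv)} the projection $p$ identifies $\rmK_2^\MW(F_v)\cong\rmK_2(F_v)$ and $\rmK_2^\MW(L_w)\cong\rmK_2(L_w)$; under these identifications the Milnor--Witt transfer corresponds to the classical Milnor transfer, since $p$ intertwines the two, while $b_v$ corresponds to the classical local Hilbert symbol by \Cref{rmk:isos}. The local identity then becomes the classical transfer formula $h_v\circ\tau_{L_w/F_v}=b_{w\mid v}\circ h_w$ with $b_{w\mid v}$ the map $\zeta\mapsto\zeta^{n_w/m_v}$, which is exactly the local computation underlying \cite{Bak-Rehmann}.

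The step I expect to be the main obstacle is the base-change formula for the Milnor--Witt transfer used in the first paragraph. Its Milnor $\rmK$-theory analogue is classical, but one must verify that Morel's transfer, built from the section of the residue sequence over $\A^1_F$ and a choice of primitive element, is genuinely compatible with passage to the completions. I expect this to follow by tracking the construction through the splitting of the closed point cut out by the minimal polynomial into the points lying over $v$, but this is where the care is required; once it is available, the remaining verifications are either trivial or reduce to the classical situation.
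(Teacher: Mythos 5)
Your argument is correct and is essentially the paper's own proof written out in full: the paper simply cites the proof of Proposition 2 of Bak--Rehmann for everything except the real--real case (where it notes that $\tau_{L_w/F_v}=\id$ since $L_w/F_v$ is then the trivial extension $\R/\R$), and that citation silently contains precisely the ingredients you spell out—compatibility of the transfer with completion, reduction at finite and complex places to the classical statement via $\rmK_2^\MW(F_v)\cong\rmK_2(F_v)$, and vanishing at real $v$ with complex $w$ by divisibility of $\rmK_2^\MW(\C)$. In particular, the base-change formula you flag as the main obstacle is exactly the point the paper leaves implicit in its appeal to \cite{Bak-Rehmann}, so your write-up is, if anything, more explicit about where the real work lies.
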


\begin{proof}
Since $\rmK_2^\MW(F_v)\cong\rmK_2(F_v)$ for each $v\not\in\Pl_\infty^\rmr$ it suffices by \cite[Proof of Proposition 2]{Bak-Rehmann} to note that 
$
\tau_{L_w/F_v}=\id\colon\rmK_2^{\MW}(\R)\to\rmK_2^\MW(\R)
$ 
whenever $v$ and $w$ both are infinite real places. Indeed, this follows since $L_w/F_v$ is then the trivial extension.
\end{proof}

\begin{lemma}
Let $x\in\WK^\MW_2(F)$, and let $p$ be a rational prime. Then there is an element $z\in\rmK_2^\MW(F)$ such that $pz=x$.
\end{lemma}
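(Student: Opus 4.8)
The plan is to deduce the lemma from the corresponding classical divisibility statement for the Milnor wild kernel, transported along the isomorphism $\WK_2^\MW(F)\cong\WK_2(F)$ established in \Cref{prop:MWmoore}. The one nonformal ingredient is the classical fact that $\WK_2(F)$ is a \emph{divisible} abelian group; this underlies the Bak--Rehmann norm theorem, and I would invoke it from the classical theory (cf. \cite{Bak-Rehmann}). As a sanity check, it is consistent with the local picture of \Cref{prop:K2MW(Fv)}: there the kernel of each local Hilbert symbol $\rmK_2(F_v)\to\mu(F_v)$ is the uniquely divisible group $A_v$, and $\WK_2(F)=\ker h$ is the global avatar of these local kernels.

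Granting this, the Milnor--Witt part is purely formal. Recall from the proof of \Cref{prop:MWmoore} that the forgetful map $p\colon\rmK_2^\MW(F)\to\rmK_2(F)$ restricts to an isomorphism $p'\colon\WK_2^\MW(F)\xrightarrow{\cong}\WK_2(F)$. Since $\WK_2^\MW(F)$ is thus abstractly isomorphic to the divisible group $\WK_2(F)$, it is itself divisible. Given $x\in\WK_2^\MW(F)$ and a prime $p$, I would pick $w\in\WK_2(F)$ with $pw=p'(x)$ and set $z\defeq(p')^{-1}(w)\in\WK_2^\MW(F)\subseteq\rmK_2^\MW(F)$; then $p'(pz)=p\,p'(z)=pw=p'(x)$, and injectivity of $p'$ gives $pz=x$.

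I expect the only genuine difficulty to be the classical divisibility of the wild kernel, which is external to the Milnor--Witt formalism. What makes the lift clean and unobstructed is precisely that $p$ restricts to an \emph{isomorphism} on wild kernels: any a priori ambiguity in the lift would live in $\ker p=\rmI^3(F)\cong\Z^{r_1}$, a non-divisible group, but since $p'$ is injective this ambiguity vanishes and no such obstruction can arise.
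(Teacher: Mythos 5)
Your argument rests on the claim that the classical wild kernel $\WK_2(F)$ is a divisible abelian group, and that claim is false in general. The wild kernel is contained in the tame kernel $\rmK_2(\calO_F)$: at each finite place the tame symbol kills the uniquely divisible summand $A_v$ of $\rmK_2(F_v)$ (any homomorphism from a divisible group to the finite group $k(v)^\times$ is zero), hence factors through the local Hilbert symbol. Since $\rmK_2(\calO_F)$ is finite by Garland's theorem, $\WK_2(F)$ is a \emph{finite} group, and a finite abelian group is divisible only if it is trivial; wild kernels of number fields are nontrivial in general. The correct classical input --- the one the paper cites from Tate --- is that every element of $\WK_2(F)$ admits a $p$-th root \emph{in the ambient group} $\rmK_2(F)$, i.e.\ $\WK_2(F)\subseteq p\,\rmK_2(F)$; the root need not lie in $\WK_2(F)$ itself. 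Consequently your element $w$ cannot be assumed to lie in $\WK_2(F)$, the expression $(p')^{-1}(w)$ is undefined, and the lift collapses. Your ``sanity check'' conflates unique divisibility of the local summands $A_v$ with divisibility of the global kernel. Note also that the lemma only asks for $z\in\rmK_2^\MW(F)$, not $z\in\WK_2^\MW(F)$: the stronger statement you set out to prove would force $\WK_2^\MW(F)\cong\WK_2(F)$ to be divisible, hence zero, which fails for many number fields.

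The step you declared ``clean and unobstructed'' is exactly where the real work lies, and it is what the paper's proof addresses. Given Tate's root $w\in\rmK_2(F)$ with $pw=p'(x)$, one must lift it to $\rmK_2^\MW(F)$, and the ambiguity in such a lift lives in $\ker p=\rmI^3(F)\cong\Z^{r_1}$, which is \emph{not} divisible --- so the lift must be chosen with care rather than dismissed. The paper does this via \Cref{prop:KMWOF}: one has the splitting $\rmK_2^\MW(F)\cong\rmK_2^{+}(F)\oplus\Z^{r_1}$ together with $\WK_2^\MW(F)\subseteq\rmK_2^{\MW,+}(F)$, and the forgetful map restricts to an isomorphism $\rmK_2^{\MW,+}(F)\cong\rmK_2^{+}(F)$. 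Since $p'(x)$ lies in the summand $\rmK_2^{+}(F)$ and the sequence $0\to\rmK_2^{+}(F)\to\rmK_2(F)\to(\Z/2)^{r_1}\to0$ is split, the $\rmK_2^{+}(F)$-component $w^{+}$ of $w$ still satisfies $pw^{+}=p'(x)$; transporting $w^{+}$ back through the isomorphism $\rmK_2^{\MW,+}(F)\cong\rmK_2^{+}(F)$ yields $z$ with $pz=x$, because $pz$ and $x$ both lie in $\rmK_2^{\MW,+}(F)$, where the forgetful map is injective. This is the content your proposal is missing; the isomorphism of wild kernels alone does not supply it.
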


\begin{proof}
By \Cref{prop:MWmoore} we have $\WK_2^\MW(F)\cong\WK_2(F)$. Moreover, $\rmK_2^\MW(F)\cong\rmK_2^{+}(F)\oplus\Z^{r_1}$ by \Cref{prop:KMWOF}. Then, since $\WK_2^\MW(F)\subseteq\rmK_2^{+}(F)$, the statement follows from the fact that any element in the classical wild kernel has a $p$-th root in $\rmK_2(F)$ \cite{Tate-symbols}. 
\end{proof}

\begin{lemma}\label{lemma:roots}
Suppose that $x\in\ker\p*{\bigoplus_{v\in\Sigma_{L/F}}h_v^\MW}$.
If $nz=x$ for some $z\in\rmK_2^\MW(F)$ and some integer $n\ge1$, then there is an element $y\in\rmK_2^\MW(L)$ such that $x+\tau_{L/F}(y)\in n\WK_2^\MW(F)$.
\end{lemma}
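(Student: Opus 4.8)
The plan is to reduce the statement to the case $n=1$ and then to kill all the local Hilbert symbols of a single class by a transfer. First I would note that $z$ also lies in $\ker\bigl(\bigoplus_{v\in\Sigma_{L/F}}h_v^\MW\bigr)$: for such $v$ the target $B_v$ equals $\Z$, so $n\,h_v^\MW(z)=h_v^\MW(x)=0$ and torsion-freeness force $h_v^\MW(z)=0$. Suppose we can produce $y'\in\rmK_2^\MW(L)$ with $z+\tau_{L/F}(y')\in\WK_2^\MW(F)$. Setting $y\defeq ny'$ and using additivity of $\tau_{L/F}$,
\[
x+\tau_{L/F}(y)=nz+n\,\tau_{L/F}(y')=n\bigl(z+\tau_{L/F}(y')\bigr)\in n\,\WK_2^\MW(F).
\]
Thus it suffices to treat the case $n=1$, i.e.\ to cancel all Hilbert symbols of $z$ by a transfer from $L$.

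Abbreviate $B^\ast\defeq\bigoplus_v\sum_{w\mid v}b_{w\mid v}$. By the commutativity of the transfer/Hilbert-symbol diagram established just above, $h_v^\MW(\tau_{L/F}(y'))=\sum_{w\mid v}b_{w\mid v}(h_w^\MW(y'))$, so the requirement $z+\tau_{L/F}(y')\in\WK_2^\MW(F)$ becomes $B^\ast\bigl((h_w^\MW(y'))_w\bigr)=-(h_v^\MW(z))_v$. By the Moore reciprocity sequence of \Cref{prop:MWmoore} applied to $L$, the families $(h_w^\MW(y'))_w$ that arise from some $y'$ are precisely the elements of $\ker\bigl(\pi_L\colon\bigoplus_w B_w\to\mu(L)\bigr)$. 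Hence I must exhibit a finitely supported $\beta\in\ker\pi_L$ with $B^\ast(\beta)=-(h_v^\MW(z))_v$.

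Ignoring the constraint $\beta\in\ker\pi_L$ for the moment, I would solve $B^\ast(\beta)=-(h_v^\MW(z))_v$ one place at a time. At a finite $v$ each $b_{w\mid v}\colon\mu(L_w)\to\mu(F_v)$ is the surjective power map $\zeta\mapsto\zeta^{n_w/m_v}$, so $h_v^\MW(z)$ can be cancelled using a single $w\mid v$; at a real $v\notin\Sigma_{L/F}$ some $w\mid v$ is real with $b_{w\mid v}=\id$; and at $v\in\Sigma_{L/F}$ one has $h_v^\MW(z)=0$, matching the vanishing of $B^\ast$ there. Since $(h_v^\MW(z))_v$ has finite support, this produces a finitely supported solution $\beta$.

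It remains, and this is the main obstacle, to move $\beta$ into $\ker\pi_L$. Since $B^\ast(\beta)=-(h_v^\MW(z))_v$ lies in $\ker\pi_F=\im h^\MW$, a direct check of the compatibility of $B^\ast$ with the two reciprocity maps shows that $\pi_L(\beta)$ is annihilated by the norm $\mu(L)\to\mu(F)$, whence $\xi\defeq\pi_L(\beta)\in\mu_{n/m}$ with $n=\#\mu(L)$ and $m=\#\mu(F)$. I would then correct $\beta$ by an element $\gamma\in\ker B^\ast$ with $\pi_L(\gamma)=\xi^{-1}$: using Chebotarev's density theorem I can choose a finite place $v_0$ of $F$ with $\mu_n(F_{v_0})=\mu(F)$ together with a place $w_0\mid v_0$; the classes $\zeta\in\ker b_{w_0\mid v_0}$, placed in the $w_0$-slot and zero elsewhere, lie in $\ker B^\ast$ and contribute $\zeta^{n_{w_0}/n}$ to $\pi_L$, sweeping out all of $\mu_{n/m}$. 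Choosing $\gamma$ with $\pi_L(\gamma)=\xi^{-1}$ gives $\beta+\gamma\in\ker\pi_L$ while still $B^\ast(\beta+\gamma)=-(h_v^\MW(z))_v$; lifting $\beta+\gamma$ through \Cref{prop:MWmoore} then yields the desired $y'$. I expect the realization of this correcting root of unity inside $\ker B^\ast$, resting on the surjectivity built into Moore's sequence together with the density argument, to be the delicate point.
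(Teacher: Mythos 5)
You follow the same route as the paper, which transports the proof of \cite[Lemma 1]{Bak-Rehmann} to the Milnor--Witt setting via \Cref{prop:MWmoore}: reduce to cancelling the symbols of the root $z$ itself, realize a prescribed finitely supported family of local symbols over $L$ through the Moore sequence, and repair the reciprocity defect inside $\ker B^\ast$. Your reduction to $n=1$ is correct, and your torsion-freeness observation at the real places is in fact a simplification of the paper's appeal to the splitness of $\rmK_2^\MW(F)\to\bigoplus_{v\in\Sigma_{L/F}}\Z$ (in the classical setting, where $B_v$ is replaced by $\mu(F_v)\cong\Z/2$, torsion is exactly what forces Bak--Rehmann to work harder at this point). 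The local solvability of $B^\ast(\beta)=-(h_v^\MW(z))_v$ and the verification that $\xi=\pi_L(\beta)$ lies in $\mu_{n/m}$ (with your notation $n=\#\mu(L)$, $m=\#\mu(F)$) are also correct.

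The gap is the step you yourself flagged as delicate, and it is genuine: Chebotarev does \emph{not} produce a finite place $v_0$ with $\mu_n(F_{v_0})=\mu(F)$, and such a place need not exist. For $v_0$ unramified in $F(\mu_n)/F$ the condition says that $\mathrm{Frob}_{v_0}$ avoids each of the proper subgroups $H_\ell\subseteq\Gal(F(\mu_n)/F)$ of elements fixing $\zeta_{\ell^{a_\ell+1}}$ (here $\ell$ runs over the primes dividing $n/m$ and $\ell^{a_\ell}$ is the $\ell$-part of $m$), and a finite group can perfectly well be the union of proper subgroups. Concretely, inside $\Gal(\Q(\zeta_{105})/\Q)\cong(\Z/3)^\times\times(\Z/5)^\times\times(\Z/7)^\times$ take the Klein four group $H$ of triples $(\epsilon_1,\epsilon_2,\epsilon_3)$ with $\epsilon_i=\pm1$ and $\epsilon_1\epsilon_2\epsilon_3=1$, let $F$ be its fixed field and $L=\Q(\zeta_{105})$, so $m=2$ and $n=210$. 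Every element of $H$ fixes one of $\zeta_3,\zeta_5,\zeta_7$, hence so does the Frobenius of every finite place $v$ of $F$ unramified in $L/F$; thus $\mu_n(F_v)\supsetneq\mu(F)$ for all such $v$ (and a short check of the places over $3$, $5$, $7$ shows no finite place works at all), so your candidate set has density zero, and a correction supported at a single place can never sweep out all of $\mu_{105}$. The standard repair --- and what makes the argument of Bak--Rehmann, hence the paper's proof, go through --- is to use one auxiliary place for \emph{each} prime $\ell\mid n/m$: since $\zeta_{\ell^{a_\ell+1}}\notin F$, Chebotarev does give infinitely many places $v$ with $\zeta_{\ell^{a_\ell+1}}\notin F_v$; a class in $\ker b_{w\mid v}$ placed at a single $w\mid v$ then contributes the whole $\ell$-primary part of $\mu_{n/m}$ to $\pi_L$, and multiplying such corrections over all $\ell\mid n/m$ reaches every element of $\mu_{n/m}$ while remaining in $\ker B^\ast$. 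With this modification your proof is complete.
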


\begin{proof}
Using the Moore reciprocity sequence \ref{prop:MWmoore} along with the fact that the homomorphism $\rmK_2^\MW(F)\to\bigoplus_{v\in\Sigma_{L/F}}\Z$
is split,
 the same proof as that of \cite[Lemma 1]{Bak-Rehmann} goes through.
\end{proof}

\begin{proof}[Proof of \Cref{prop:norm}]
The desired result now follows from \Cref{lemma:roots} in an identical manner as the proof of \cite[Theorem 2]{Bak-Rehmann}.
\end{proof}

\printbibliography
\end{document}